\newtheorem{theorem}{Theorem}
\newtheorem{lemma}[theorem]{Lemma}
\newtheorem{proposition}[theorem]{Proposition}
\newtheorem{definition}[theorem]{Definition}
\newtheorem{example}[theorem]{Example}
\newtheorem{corollary}[theorem]{Corollary}
\newtheorem{remark}[theorem]{Remark}
\begin{document}

\title{Intersecting families of extended balls in the Hamming spaces}
\author{Anderson N. Martinhão\footnote{The first author is supported by Capes} \ \footnote{E-mail: anderson.martinhao@gmail.com} and
Emerson L. Monte Carmelo\footnote{The second author is supported in part by CNPq.} \ \footnote{E-mail: elmcarmelo@uem.br}  \\
{\small Universidade Estadual de Maringá} \\
 {\small Departamento de Matemática} \\
 \date{\today}}
\maketitle

\begin{abstract}
A family $\mathcal{F}$ of subsets of a set $X$ is $t$-intersecting
if $\vert  A_i \cap A_j \vert \geq t$ for every $A_i, \; A_j \in
\mathcal{F}$. We study intersecting families in the Hamming
geometry. Given $X=\mathbb{F}_q^3$ a vector space over the finite
field $\mathbb{F}_q$, consider a family where each $A_i$ is an
extended ball, that is, $A_i$ is the union of all balls centered in
the scalar multiples of a vector. The geometric behavior of extended
balls is discussed. As the main result, we investigate a ``large"
arrangement of vectors whose extended balls are ``highly
intersecting". Consider the following covering problem: a subset
$\mathcal{H}$ of $\mathbb{F}_q^3$ is a short covering if the union
of the all extended balls centered in the elements of $\mathcal{H}$
is the whole space. As an application of this work, minimal
cardinality of a short covering is improved for some instances of $q$.\\

\noindent \textbf{MSC(2010):} 05D05, 05B40, 11T99. \\

\noindent \textbf{Keywords:} Intersecting family, ball, Hamming
distance, finite field, extremal pro\-blem.
\end{abstract}

\section{Introduction}\label{intro}

\subsection{Intersecting family}

A family $\mathcal{F}=\{A_1, \ldots, A_m \}$ of subsets of an
underlying set $X$ is \emph{$t$-intersecting} if $\vert  A_i \cap
A_j \vert \geq t$ for any $i \neq j$. A classical class of problems
in extremal combinatorics deals with the computation of the maximum
cardinality of a $t$-intersecting family under certain constraints.
Typically, the imposed conditions are $X=\{1, \ldots, n\}$ and
$\vert A_i \vert =k$ for any $1 \leq i \leq m$. The solution for
$t=1$ is called the Erd\H{o}s-Ko-Rado theorem \cite{EKR}. The
complete solution for arbitrary $t$ was proved by Katona \cite{K}.
Algebraic versions of the Erd\H{o}s-Ko-Rado theorem have been
investigated for intersecting chains of boolean algebra by Erd\H{o}s
et al. \cite{ESS}, and for subspaces of a finite vector space by
Czabarka  \cite{Cz}.

On the other hand, the characterization of the extremal families was
obtained by Ahlswede and Khachatrian \cite{AKh} in connection with
the diameter problem in Hamming spaces. Interplays between extremal
combinatorics and geometry in Hamming spaces present several
difficult problems (see \cite{AK, Ko}, for instance), some of them
are motivated by applications to information theory (see \cite{B,
Ho}). These contributions have investigated union or intersection of
suitable arrangements of balls and their relationships with lines,
hyper-planes, or other geometric configurations.

In this work, we investigate intersecting families under a new
perspective: each $A_i$ is an arrangement of balls, as described
below.

\subsection{Extended ball}
Let $X=\mathbb{F}_q^3$ be the vector space over the finite field
$\mathbb{F}_q$, where $q$ denotes a prime power. Recall that the
\emph{Hamming distance} between two vectors $u=(u_1, u_2, u_3)$ and
$v=(v_1, v_2, v_3)$ is  $d(u,v)= \vert \{ i \ : \ u_i \neq v_i \}
\vert$. The \emph{ball of center} $u$ and \emph{radius} $1$ is
denoted by $B(u)=\{ v \in \mathbb{F}_q^3 \ : \ d(u,v) \leq 1 \}$.

If each $A_i$ is a ball and $m=\vert \mathcal{F} \vert$ is
``sufficiently large", then clearly there are two disjoint balls.

Consider a variant induced by a geometric change: each center $u$ is
``replaced" by a line. More precisely, given a vector $u$ in
$\mathbb{F}_q^3$, the {\it extended ball} (along the line induced by
$u$) is defined as
\begin{equation}\label{e0}
E(u)=\bigcup_{\lambda \in \mathbb{F}_q}  B(\lambda u).
\end{equation}

The covering problem induced by extended balls in an arbitrary space
$\mathbb{F}_q^n$ is called {\it short covering}, motivated by the
fact that short covering might provide us a way to store non-linear
codes using less memory than the classical ones. Applications to the
classical numbers $K_{q}(n,R)$ (from covering codes) appear in
\cite{otavio} and some of its references. On theoretical viewpoints,
 results on short coverings have been obtained from distinct tools:
graph theory \cite{anderson}, linear algebra \cite{otavio}, ring
theory \cite{Nakaoka, YSBY}.

In this paper we investigate how the extended balls intersect one
another. However, a new obstacle arises here:
 the cardinality of $E(u)$ is not invariant, for example, $\vert E((0,0,0)) \vert= \vert
B((0,0,0)) \vert =3q-2$ but $\vert E((1,1,1)) \vert = 3q^2-2q$, see
\cite{carmelo}.

Consider the family $ \mathcal{G}=\{ E(u) : \; u \in \mathbb{F}_q^3
\}$. The ``bound"  $ \vert {E}(u) \cap {E}(v) \vert \geq \vert B(0)
\vert =3q-2$ shows us that $\mathcal{G}$ is a trivial $3q-2$
intersecting family. This bound can not be improved for all space,
because it is sharp for the case $u=(0,0,1)$ and $v=(1,2,0)$ in
$\mathbb{F}_5^3$, for instance.

\subsection{The main statement}

As an attempt to improve the trivial bound above, we investigate a
``large" subfamily such that each pair of extended balls has
intersection with ``bigger size".  More precisely, given a family
$\mathcal{F}$, let us introduce
\[
\theta(\mathcal{F})= \max \{ \; t\; : \;  \mathcal{F} \mbox { is
$t$-intersecting} \}.
\]
This parameter is closely related to concepts from extremal set
theory. A family $\mathcal{F}$ is a {\it weak $\Delta$-system} if
there is $\lambda$ such that $\vert A_i \cap A_j \vert =\lambda$ for
any $i \neq j$, introduced by Erd\H{o}s et.al \cite{EMR}. Note that
$\theta(\mathcal{F}) \geq \lambda$. The {\it intersection structure}
of a family $\mathcal{F}$  is the set
$$I(\mathcal{F})=\{ A_i \cap A_j \; : \; i \neq j \},$$
which was studied by Talbot\cite{Talbot}, for instance. The min-max
property $ \theta(\mathcal{F})= min\{ \vert C \vert \;: \; C \in
I(\mathcal{F})\}$ holds.

Take $\mathcal{H} = \{\widetilde{E}(u) \ : \ u \in \mathbb{F}_q^3,
\; u\neq(0,0,0) \}$ as an example. What about $\theta(\mathcal{H})$?
We will see that $\theta(\mathcal{H})=0$ as an immediate consequence
of Theorem \ref{bolas} . The behavior of these intersections is more
curious when restricted to the following environmental
\[
\mathcal{D}_q= \{ (u_1,u_2,u_3) \in \mathbb{F}_q^3 \ : \ u_1,u_2,u_3
\mbox{ are pairwise distinct and non-zero} \}.
\]
Indeed, this computation depends on the arithmetic form of $q$; more
precisely:

\begin{theorem}\label{dream} Given a prime power $q$, let $\mathcal{E} = \{ \widetilde{E}(u) \ : \ u \in \mathcal{D}_q \}$. Thus
\[
\theta(\mathcal{E})= \left\{
\begin{array}{ll}
2(q-1) & \mbox{ if } q-1 \not\equiv 0 \,(mod)\, 3\\
0      & \mbox{ if } q-1 \equiv 0 \, (mod)\, 3.
\end{array}
\right.
\]
\end{theorem}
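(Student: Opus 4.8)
The plan is to reduce $\theta(\mathcal E)$ to a count of vectors attached to a pair of distinct lines through the origin, and then to isolate the one arithmetic condition responsible for the mod-$3$ dichotomy. Fix $u,v\in\mathcal D_q$ with $\langle u\rangle\ne\langle v\rangle$ and put $s_i=u_i/v_i$, so that $(s_1,s_2,s_3)$ has at least two, but not all three, entries distinct. I would first record the shape of $\widetilde E(u)$ (essentially the content of Theorem \ref{bolas}): for $u\in\mathcal D_q$, a vector $w\in\mathcal D_q$ belongs to $\widetilde E(u)$ if and only if either $w\in\langle u\rangle$, or $w$ is at Hamming distance exactly $1$ from a (then unique) scalar multiple $\lambda u$, in a (then unique) coordinate. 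Splitting $w\in\widetilde E(u)\cap\widetilde E(v)$ according to whether $w$ lies on each of $\langle u\rangle,\langle v\rangle$ or only near it, one finds: the ``on both lines'' case is empty; each of the two ``on one line, near the other'' cases occurs, and contributes exactly $q-1$ vectors, precisely when $(s_1,s_2,s_3)$ has a repeated entry; in the ``near both lines'' case, if $w$ disagrees with its two nearest multiples in the same coordinate then again $(s_1,s_2,s_3)$ has a repeated entry, while if it disagrees with $\lambda u$ in coordinate $i$ and with $\mu v$ in coordinate $i'\ne i$ (write $k$ for the third coordinate) then $w$ is determined by $\lambda$ alone, such $w$ exist if and only if $u_kv_i\ne u_{i'}v_k$, and there are then $q-1$ of them. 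By the uniqueness built into the shape of $\widetilde E$, distinct ordered pairs $(i,i')$ yield disjoint families, and all of these are disjoint from the families living on a line. Hence: if $(s_1,s_2,s_3)$ has a repeated entry then $|\widetilde E(u)\cap\widetilde E(v)|\ge 2(q-1)$ already; and if $(s_1,s_2,s_3)$ has all distinct entries then $|\widetilde E(u)\cap\widetilde E(v)|=(q-1)N$, where $N$ is the number of ordered pairs $(i,i')$ with $i\ne i'$ and $u_kv_i\ne u_{i'}v_k$.

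I would then analyse $N$. The six equalities $u_kv_i=u_{i'}v_k$ are exactly the members of the two triples $u_1v_2=u_2v_3=u_3v_1$ and $u_2v_1=u_3v_2=u_1v_3$. Inside each triple the three equalities are cyclically linked (of the form $A=B$, $B=C$, $C=A$), so any two of them force the third; hence the number holding in each triple is $0$, $1$, or $3$, the total over both triples lies in $\{0,1,2,3,4,6\}$, and so $N=6-(\text{total})\in\{0,2,3,4,5,6\}$; in particular $N\ne 1$. Moreover, if all six equalities hold, with common values $P$ and $Q$ for the two triples, then multiplying the three equalities of a triple shows $P^3=u_1u_2u_3v_1v_2v_3=Q^3$, so $(P/Q)^3=1$.

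The dichotomy follows. If $q-1\not\equiv 0\pmod 3$, the only cube root of unity is $1$, so $P=Q$; comparing $u_3v_1=P$ with $u_3v_2=Q$ forces $v_1=v_2$, contradicting $v\in\mathcal D_q$. Thus not all six equalities can hold, $N\ge 2$, and $|\widetilde E(u)\cap\widetilde E(v)|\ge 2(q-1)$ for all such $u,v$. Equality is attained by $v=(1,a,a^2)$, $u=(a^2,a,a^3)$ for any $a\in\mathbb F_q\setminus\{0,1,-1\}$ (a nonempty set, since $q\ge 5$ in this case): here $(s_1,s_2,s_3)=(a^2,1,a)$ has distinct entries, the first triple holds entirely, the second holds in exactly one place (because $a^3\ne 1$), so $N=2$. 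Hence $\theta(\mathcal E)=2(q-1)$. If instead $q-1\equiv 0\pmod 3$, pick $\zeta$ of multiplicative order $3$ and take $u=(1,\zeta,\zeta^2)$, $v=(\zeta,1,\zeta^2)$; then $(s_1,s_2,s_3)=(\zeta^2,\zeta,1)$ has distinct entries and all six equalities hold, so $N=0$, giving $\widetilde E(u)\cap\widetilde E(v)=\emptyset$. Since both sets are nonempty (each contains its punctured line) they are distinct members of $\mathcal E$, and therefore $\theta(\mathcal E)=0$.

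The most delicate step should be the four-case bookkeeping of the first paragraph --- in particular, checking that the ``near both lines, different coordinates'' family contributes \emph{exactly} $q-1$ vectors per violated equality, with no surviving distinctness or nonzeroness constraint trimming it further, and that the various families are pairwise disjoint. It is precisely this exactness, rather than a mere lower bound, that turns $2(q-1)$ into the exact value of $\theta(\mathcal E)$ and lets the sharpness example close the argument.
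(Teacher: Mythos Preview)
Your argument is correct, and it is genuinely different from the paper's. The paper proves Theorem~\ref{dream} by first establishing the formula $|\widetilde E(u)\cap\widetilde E(v)|=(q-1)\rho_q(u,v)$ (Corollary~\ref{banana001}), then reducing $\rho_q$ to at most three ball-intersections (Lemma~\ref{estranged}), and finally running an exhaustive five-item case analysis on the shape of $(u,v)$ after normalization (Theorem~\ref{icarus}), with the mod-$3$ dichotomy extracted afterward in Theorem~\ref{forever}. You instead work invariantly with the ratio vector $(s_1,s_2,s_3)=(u_i/v_i)$, decompose the intersection by disagreement pattern, and recognise the six ``bad'' equalities as two cyclic triples $A=B=C$; the ``two force the third'' observation then gives $N\in\{0,2,3,4,5,6\}$ in one line, which is exactly the content of the paper's case analysis. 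Your cube-root argument ($P^3=Q^3$, hence $P=Q$ when $3\nmid q-1$, forcing $v_1=v_2$) is a cleaner and more direct explanation of the arithmetic obstruction than the paper's translation through exponents $2a=b$, $2b=a$ in $\mathbb Z_{q-1}$.

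What the paper's route buys is that each case is a concrete computation with Remark~\ref{uva}, so nothing needs to be abstracted; what your route buys is a structural reason why the minimum jumps from $0$ straight to $2$ (never $1$), and a single uniform treatment of the ``all $s_i$ distinct'' case rather than several subcases. Your acknowledged delicate step --- that the $(i,i')$ family has size exactly $q-1$ with no residual distinctness constraint --- does check out: the conditions $w_{i'}\ne w_k$ and $w_i\ne w_k$ reduce to $u_{i'}\ne u_k$ and $v_i\ne v_k$, both guaranteed by $u,v\in\mathcal D_q$, and the genuine-disagreement conditions reduce to $s_i\ne s_k$ and $s_{i'}\ne s_k$, guaranteed by the hypothesis that the $s_i$ are distinct. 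Disjointness of families follows from the uniqueness of the nearest multiple (Claim~2 in the proof of Theorem~\ref{bolas}).
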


As a consequence, Theorem \ref{dream} reveals a high degree of
intersection if $3$ does not divide $q-1$:  $ \vert {E}(u) \cap
{E}(v) \vert \geq \vert \widetilde{E}(u) \cap \widetilde{E}(v) \vert
+ \vert B(0) \vert \geq 5q+4$  for any $u$ and $v$ in
$\mathcal{D}_q$.

\subsection{An application to short coverings}

As a complement of this work, the impact of the previous results
into the short covering problem is discussed. The set
 $\mathcal{H} \subset \mathbb{F}_q^3$ is a \emph{short covering of $\mathbb{F}_q^3$} if
\begin{equation}\label{e1}
\bigcup_{h \in \mathcal{H}} E(h)= \mathbb{F}_q^3.
\end{equation}
What is the minimum number $c(q)$ of extended balls that cover the
whole space $\mathbb{F}_q^3$? The only known values are: $c(2)=1$,
$c(3)=3$, $c(4)=3$, and $c(5)=4$. The best known bounds for $q \geq
7$ are described below, according to \cite{anderson} and its
references.
\begin{equation}\label{bestinfsup}
\left\lceil \frac{q+1}{2} \right\rceil \leq c(q) \leq \left\{
\begin{array}{l}
(q+3)/2, \mbox{ if } q \equiv 3 \ (mod \ 4)\\
(q+5)/2, \mbox{ if } q \equiv 1 \ (mod \ 4)\\
3(q+4)/4, \mbox{ if } q \mbox{ is even}.
\end{array} \right.
\end{equation}
In particular, $4 \leq c(7) \leq 5$, $5 \leq c(8) \leq 9$, and $5
\leq c(9) \leq 7$.

\begin{theorem}\label{anderson}
The values $c(7)=5$, $c(8)=6$, and $c(9)=6$ hold.
\end{theorem}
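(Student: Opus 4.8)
The plan is to bracket each of $c(7),c(8),c(9)$ between a construction and a non-existence argument. Since (\ref{bestinfsup}) already supplies $c(7)\le 5$, on the upper side it remains to exhibit two short coverings of size $6$, one of $\mathbb F_8^3$ and one of $\mathbb F_9^3$; on the lower side one must rule out a short covering of $\mathbb F_7^3$ of size $4$ and short coverings of $\mathbb F_8^3$ and $\mathbb F_9^3$ of size $5$. Throughout I would use the decomposition of an extended ball according to the support of its centre, which is the point of the geometric discussion preceding the statement: after identifying the full-support vectors of a fixed direction with a point of the array $\mathbb F_q^{\times}\times\mathbb F_q^{\times}$, a centre of full support covers a union of three concurrent ``lines'' of that array, hence exactly $3q-5$ of its $(q-1)^2$ entries (equivalently, it misses the $(q-2)(q-3)$ entries lying off all three of those concurrent lines); a centre of support $2$ covers one whole coordinate plane plus a single line of the array; a centre of support $1$ covers two coordinate planes and nothing of the array.

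For the lower bounds I argue by contradiction from a short covering $\mathcal H$ of the forbidden size and classify $\mathcal H$ by the supports of its members. The vectors of support at most $2$ split into three ``plane families''; since a full-support centre meets such a family in at most one slope class, and $|\mathcal H|<q-1$ prevents collecting all $q-1$ of them, each plane family needs a centre lying in its coordinate plane. A support-$1$ centre lies in two such planes and a support-$2$ centre in exactly one, so $\mathcal H$ has to spend enough low-support centres to account for all three planes; this leaves only a short list of admissible ``support profiles'' for $\mathcal H$. For each profile I then count on the array: the remaining full-support centres are so few that some row meets none of their ``base rows'' (nor the row contributed by a support-$2$ centre, if present), and in that row a full-support centre covers at most $2$ entries while a support-$2$ centre contributes at most $1$; summing, the covered entries of that row fall below $q-1$, a contradiction. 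Profiles for which this count is not by itself decisive would be eliminated by combining it with the pairwise-intersection estimates of Theorem \ref{dream}, the trivial $|E(u)\cap E(v)|\ge |B(0)|$, and the elementary bound $|\bigcup_i E(h_i)|\le\sum_i |E(h_i)|-\sum_{i\ge 2}\max_{j<i}|E(h_i)\cap E(h_j)|$, compared with $q^3$.

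For the upper bounds I would write down six concrete directions: a couple of small-support ones to sweep out the three coordinate planes and the remainder of full support to fill the array. With six centres (as opposed to the forbidden sizes) the row count acquires one unit of slack, so the full-support ``lines'' can be made to overlap suitably in the leftover rows; choosing them with enough multiplicative structure (the ratios of the extra base rows forming a small set stable under inversion), the verification of (\ref{e1}) reduces to a finite check.

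The step I expect to be hardest is the lower bound for $q=7$. There $3\mid q-1$, so Theorem \ref{dream} only yields $\theta(\mathcal E)=0$: two full-support extended balls may share as few as $|B(0)|=3q-2$ points, and the volume/intersection estimates are powerless. One must then rest entirely on the array count, and the bookkeeping of how many full-support centres survive after the three coordinate planes are accounted for---and whether they can fill every row---has to be pushed through case by case. By contrast $q=8$ and $q=9$, where $\theta(\mathcal E)=2(q-1)$ is at hand, go through comparatively smoothly.
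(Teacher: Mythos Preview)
Your overall architecture---constructions for the upper bounds, support-profile classification plus a counting obstruction for the lower bounds---matches the paper's, but the counting framework you set up is not the one the paper uses, and you have the relative difficulty of the three cases exactly backwards.

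The paper does not work with your ``array'' of $(q-1)^2$ full-support directions. It works with $\mathcal{D}_q=\{(u_1,u_2,u_3):u_i\ \text{pairwise distinct and nonzero}\}$ and with $\widetilde E(u)=E(u)\cap\mathcal{D}_q$. The decisive feature of this choice, which your array does not have, is that after normalizing the mandatory weight-$3$ centre to $(1,1,1)$ via $\mathbb F_q$-equivalence one gets $|\widetilde E((1,1,1))|=0$ (Theorem \ref{bolas}, case $\omega=3,\ \delta=1$). Likewise any weight-$1$ centre contributes $0$. So for $q=7$ with $m=4$ only three centres remain and the crude sum $\sum|\widetilde E(h_i)|$ already falls below $|\mathcal D_7|=120$; for $q=9$ with $m=5$ the same crude sum falls below $|\mathcal D_9|=336$. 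No row-by-row bookkeeping and no intersection estimate is needed in either case. It is $q=8$, not $q=7$, that is tight: the profile $\mathcal H_2=\{(1,1,1),(0,*,*),(*,0,0),h_4,h_5\}$ gives $\sum|\widetilde E(h_i)|\le 217>210=|\mathcal D_8|$, and one closes the gap by first forcing $h_4,h_5\in\mathcal D_8$ (else $|\widetilde E|\le 70$ and the sum drops to $196$) and then invoking Theorem \ref{dream}, which for $q=8$ (where $3\nmid q-1$) yields $|\widetilde E(h_4)\cap\widetilde E(h_5)|\ge 2(q-1)=14$, bringing the union below $203<210$. Your expectation that $q=7$ is hardest because $\theta(\mathcal E)=0$ there is thus a misreading of where the intersection bound is actually needed.

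Your array approach is not wrong in principle, but by treating every full-support centre as worth $3q-5$ you forfeit the ``$(1,1,1)$ contributes nothing'' trick, and the row count you describe becomes tight (equal to $q-1$, not strictly less) for some profiles at $q=8$---for instance three full-support centres together with one weight-$1$ and one weight-$2$ centre of type $(0,*,*)$. You would then have to fall back on intersection/volume inequalities in the ambient space, which you sketch but do not carry out; translating Theorem \ref{dream} (a statement about $\widetilde E$) into your direction array is an extra step you have not addressed. The paper's restriction to $\mathcal D_q$ avoids all of this.
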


This work is structured as follows. The geometry of the
substructures $ \widetilde{B}(u)={B}(u) \cap \mathcal{D}_q$ and $
\widetilde{E}(u)=E(u)\cap \mathcal{D}_q$ play a central role in our
research. Information on these sets and their cardinalities can be
derived from a group of transformations described in Section
\ref{Bq}. In contrast to the classical ball, the cardinality of
$\widetilde E(u)$ vanishes according to certain parameters, as
stated in Theorem \ref{bolas}.
 A harder problem is studied in Section \ref{Intersection},
namely, the intersection of extended balls restricted to the set
$\mathcal{D}_q$. Theorem \ref{dream} is proved in Section
\ref{prova}. The lower bounds on $c(q)$ are obtained in Section
\ref{lower bounds}. On the other hand, optimal upper bounds are
constructed in Section \ref{bounds}.

\section{Extended balls}  \label{Bq}

\subsection{Preliminaries: extended balls in $\mathbb{F}_q^3$}

 What kind of application preserves the cardinality of $E(u)$? In order to
 answer this question, we review briefly a well-known action on groups. We
recommend the book \cite{Ts} for further details.

Given a prime power $q$, $L_q$ denotes the group of non-singular
linear operators of $\mathbb{F}_q$. Let $L_q^{3}$ be the direct
product $L_q \oplus L_q \oplus L_q$. As usual, $S_3$ denotes the
symmetric group of degree $3$. A natural action of $S_3$ on the
group $L_q^{3}$ is obtained by permutation of coordinate. This
action induces the \emph{wreath product} of $\mathbb{F}^{*}_q$ by
$S_3$,
\[
S_{3} \ltimes L_q^{3} = \{ (\varphi,\sigma) \, : \, \varphi \in S_3
\mbox{ and } \sigma \in L_q^{3} \},
\]
The cardinality of $E(u)$ vanishes according to the weight of $u$.
Recall that the \emph{weight} of a vector $u=(u_1,u_2,u_3)$ denotes
the number $\omega(u)=\vert \{ i \ : \ u_i \neq 0 \}\vert $.

\begin{lemma}\cite{carmelo}\label{grazieli}
If the vectors $u$ and $v$ are in the same orbit of $\mathbb{F}^3_q$
by the action $S_3 \ltimes L_q^{3}$, then  the cardinalities of the
sets $E(u)$ and $E(v)$ are equal. In particular, $\vert E(u)\vert
=\vert E(v) \vert$ whenever  $\omega(u) =\omega(v)$.
\end{lemma}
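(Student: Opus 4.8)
The plan is to show that each element of the wreath product $S_3 \ltimes L_q^{3}$ acts on $\mathbb{F}_q^3$ by a bijection that carries extended balls to extended balls, hence preserves their cardinality; the final sentence then follows because weight is an invariant of the orbits. First I would make precise how an element $(\varphi,\sigma)$ with $\sigma=(\sigma_1,\sigma_2,\sigma_3)\in L_q^{3}$ and $\varphi\in S_3$ acts on a vector $u=(u_1,u_2,u_3)$, namely by $(\varphi,\sigma)\cdot u = \bigl(\sigma_1 u_{\varphi^{-1}(1)},\,\sigma_2 u_{\varphi^{-1}(2)},\,\sigma_3 u_{\varphi^{-1}(3)}\bigr)$, i.e. permute the coordinates and then apply an invertible scalar to each. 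Call this map $T=T_{(\varphi,\sigma)}:\mathbb{F}_q^3\to\mathbb{F}_q^3$. Since each $\sigma_i$ is a non-singular linear operator of $\mathbb{F}_q$ (multiplication by a non-zero scalar) and permutation of coordinates is a bijection, $T$ is a bijection of $\mathbb{F}_q^3$.

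The key step is that $T$ is an \emph{isometry} of the Hamming metric: because $\sigma_i u_i=0$ iff $u_i=0$ and permuting coordinates does not change which coordinates agree, we get $d(Tu,Tv)=d(u,v)$ for all $u,v$. Consequently $T$ maps the ball $B(w)$ bijectively onto $B(Tw)$ for every $w$. The remaining point is that $T$ respects the line structure used in the definition \eqref{e0}: for a fixed $u$, as $\lambda$ ranges over $\mathbb{F}_q$, the image $T(\lambda u)$ ranges over the scalar multiples of $Tu$. This is immediate from linearity in each coordinate, $T(\lambda u)=\lambda\,(Tu)$, so $T$ permutes the set $\{\lambda u:\lambda\in\mathbb{F}_q\}$ onto $\{\mu (Tu):\mu\in\mathbb{F}_q\}$ (in fact $\lambda\mapsto\lambda$ here, since the scalars $\sigma_i$ act coordinatewise and commute with the global scalar $\lambda$). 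Combining the last two facts,
\[
T\bigl(E(u)\bigr)=T\Bigl(\bigcup_{\lambda\in\mathbb{F}_q}B(\lambda u)\Bigr)=\bigcup_{\lambda\in\mathbb{F}_q}B\bigl(T(\lambda u)\bigr)=\bigcup_{\lambda\in\mathbb{F}_q}B\bigl(\lambda\,(Tu)\bigr)=E(Tu).
\]
Hence if $v=Tu$ lies in the same orbit as $u$, then $E(v)=T(E(u))$ and, $T$ being a bijection, $\vert E(u)\vert=\vert E(v)\vert$.

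For the ``in particular'' clause, I would observe that the coordinatewise non-zero scalars together with coordinate permutations act transitively on the set of vectors of any fixed weight $k\in\{0,1,2,3\}$: given $u,v$ with $\omega(u)=\omega(v)=k$, a coordinate permutation moves the support of $u$ onto the support of $v$, and then on each coordinate in that common support one chooses $\sigma_i$ to be multiplication by the (non-zero) ratio of the corresponding entries, while on the zero coordinates any non-zero $\sigma_i$ works. Thus $\omega(u)=\omega(v)$ forces $u,v$ to be in the same orbit, and the cardinality equality follows from the first part. I do not expect a genuine obstacle here; the only thing requiring a little care is getting the semidirect-product action written down consistently (order of permuting versus scaling) so that $T$ is verified to be both a bijection and linear in each coordinate — everything else is a direct unwinding of the definitions.
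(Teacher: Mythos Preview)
Your argument is correct: showing that each $T_{(\varphi,\sigma)}$ is a Hamming isometry that commutes with global scalar multiplication is exactly what is needed to get $T(E(u))=E(Tu)$, and the transitivity-on-weight-classes argument for the final clause is the natural one. Note, however, that the paper does not give its own proof of this lemma at all --- it is quoted from \cite{carmelo} --- so there is nothing to compare your route against; you have supplied a clean self-contained justification where the paper simply cites the result. One minor wording point: your isometry justification should appeal to the injectivity of each $\sigma_i$ (so $\sigma_i a=\sigma_i b\iff a=b$) rather than only to $\sigma_i a=0\iff a=0$, since it is agreement of arbitrary entries, not just vanishing, that matters for the Hamming distance.
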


\subsection{Extended balls in $\mathcal{D}_q$} This subsection provides
information on the sets:
\[ \widetilde{B}(u)={B}(u) \cap \mathcal{D}_q \;\;\; \mbox{  and  } \;\;\;\
\widetilde{E}(u)=E(u) \cap \mathcal{D}_q.
\]

We begin with a version of Lemma \ref{grazieli} for extended balls
restricted to $\mathcal{D}_q$. Consider the following subgroup of
$L_q^{3}$
$$
K=\{(\sigma ,\sigma ,\sigma ) \ : \ \sigma \in L_q \}=\{
(u_1,u_2,u_3) \mapsto (\lambda u_1, \lambda u_2, \lambda u_3) \ : \
\lambda \in \mathbb{F}_{q}^{*} \}.
$$
The commutative property $(\varphi,\sigma) \cdot
(\psi,\tau)=(\psi,\tau)\cdot(\varphi,\sigma)$ in $S_3 \ltimes K$
 yields that
$S_3 \ltimes K$ can be regard as the direct product $S_3 \times K$.

\begin{lemma}\label{grazieli2} The standard action of $S_3 \times K$ on $\mathbb{F}_{q}^{3}$ preserves
certain cardinalities of extended balls restricted to the set
$\mathcal{D}_q$:
\begin{enumerate}
\item $ \widetilde{E}(u^{\varphi}) = (\widetilde{E}(u))^{\varphi} $ for any $u$ in $\mathbb{F}_{q}^{3}$ and any $ \varphi  \in S_3 $.
\item $ \widetilde{E}(u^{\sigma}) = \widetilde{E}(u) $ for any $u$ in $\mathbb{F}_{q}^{3}$ and $ \sigma \in K $.
\item If $u$ and $v$ are in the same orbit of $\mathbb{F}_q^3$ by the
action $S_3 \times K$, then  $ \vert \widetilde{E}(u) \vert = \vert
\widetilde{E}(v) \vert $.
\end{enumerate}
\end{lemma}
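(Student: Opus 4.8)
The plan is to verify the three assertions in the order stated, since the first two are the workhorses and the third is a formal consequence. For part (1), I would argue directly from the definitions. Fix $\varphi \in S_3$ and recall that $\varphi$ acts on $\mathbb{F}_q^3$ by permuting coordinates. The key point is that the Hamming distance and the set $\mathcal{D}_q$ are both invariant under coordinate permutations: $d(u^{\varphi},v^{\varphi}) = d(u,v)$, and $w \in \mathcal{D}_q$ if and only if $w^{\varphi} \in \mathcal{D}_q$ (being pairwise distinct and nonzero is a symmetric condition on the entries). Hence $B(\lambda u^{\varphi}) = (B(\lambda u))^{\varphi}$ for every $\lambda$, so $E(u^{\varphi}) = (E(u))^{\varphi}$, and intersecting with $\mathcal{D}_q = (\mathcal{D}_q)^{\varphi}$ gives $\widetilde{E}(u^{\varphi}) = (E(u^{\varphi})) \cap \mathcal{D}_q = (E(u))^{\varphi} \cap (\mathcal{D}_q)^{\varphi} = (E(u) \cap \mathcal{D}_q)^{\varphi} = (\widetilde{E}(u))^{\varphi}$.

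For part (2), fix $\sigma \in K$, say $\sigma$ is the scaling $w \mapsto \lambda_0 w$ for some $\lambda_0 \in \mathbb{F}_q^{*}$. Here the invariance is even stronger: I claim $E(u^{\sigma}) = E(u)$ on the nose, before intersecting with $\mathcal{D}_q$. Indeed $E(u^{\sigma}) = \bigcup_{\lambda \in \mathbb{F}_q} B(\lambda \lambda_0 u)$, and as $\lambda$ ranges over $\mathbb{F}_q$ so does $\lambda \lambda_0$ (since $\lambda_0 \neq 0$), so this union is exactly $\bigcup_{\mu \in \mathbb{F}_q} B(\mu u) = E(u)$. Intersecting both sides with $\mathcal{D}_q$ yields $\widetilde{E}(u^{\sigma}) = \widetilde{E}(u)$. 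This is the place where the specific structure of $E(u)$ as a union over the whole scalar line — rather than a single ball — is doing the work; a single ball $B(u)$ would not satisfy $B(\lambda_0 u) = B(u)$ in general.

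For part (3), suppose $u$ and $v$ lie in the same $S_3 \times K$-orbit, so $v = (u^{\varphi})^{\sigma}$ for some $\varphi \in S_3$ and $\sigma \in K$ (using that the two factors commute, as noted before the lemma, the order is irrelevant). Applying part (2) gives $\widetilde{E}(v) = \widetilde{E}((u^{\varphi})^{\sigma}) = \widetilde{E}(u^{\varphi})$, and then part (1) gives $\widetilde{E}(u^{\varphi}) = (\widetilde{E}(u))^{\varphi}$. Since $\varphi$ acts on $\mathbb{F}_q^3$ as a bijection (a permutation of coordinates), $\vert (\widetilde{E}(u))^{\varphi} \vert = \vert \widetilde{E}(u) \vert$, whence $\vert \widetilde{E}(v) \vert = \vert \widetilde{E}(u) \vert$. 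I do not anticipate a serious obstacle here; the only point requiring a little care is the bookkeeping in part (1) — namely checking that $\mathcal{D}_q$ is permutation-invariant and that $\varphi$ distributes over intersection and over the union defining $E(u)$ — and making sure in part (3) that the decomposition $v = (u^{\varphi})^{\sigma}$ is applied in a consistent order, which the $S_3 \times K$ (rather than wreath) structure guarantees.
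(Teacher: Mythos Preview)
Your argument is correct and is precisely the routine verification the paper has in mind: the paper's own proof reads in its entirety ``The proofs are straightforward.'' Your write-up simply supplies the details (permutation-invariance of $d$ and of $\mathcal{D}_q$ for part (1), reindexing the scalar union for part (2), and composing the two for part (3)), so there is nothing to compare.
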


\begin{proof}
The proofs are straightforward.
\end{proof}

What is the cardinality of $\widetilde{E}(u)$? Of course,
$\widetilde{E}(0)=\emptyset$.
 In  order to compute $\vert \widetilde{E}(u) \vert,$ we can assume that the first non-zero coordinate of
 $u$ is $1$, by Lemma
\ref{grazieli2}. The cardinality of  $\widetilde{E}(u) $ depends on
the weight of $u=(u_1,u_2,u_3)$ and the parameter $\delta(u)=\vert
\{u_1,u_2,u_3 \} \vert$, as described in the next statement.

\begin{theorem} \label{bolas}
Let $u$ be a vector in $\mathbb{F}_q^3$.
\begin{enumerate}
\item If $\omega(u)=1$ and $\delta(u)=2$, then $\vert \widetilde{E}(u)\vert =0$.
\item If $\omega(u)=2$ and $\delta(u)=2$, then $\vert \widetilde{E}(u)\vert =0$.
\item If $\omega(u)=2$ and $\delta(u)=3$, then $\vert \widetilde{E}(u)\vert = (q-1)(q-3)$.
\item If $\omega(u)=3$ and $\delta(u)=1$, then $\vert \widetilde{E}(u) \vert=0.$
\item If $\omega(u)=3$ and $\delta(u)=2$, then $\vert \widetilde{E}(u) \vert=(q-1)(2q-6)$.
\item If $\omega(u)=3$ and $\delta(u)=3$, then $\vert \widetilde{E}(u) \vert=(q-1)(3q-11)$.
\end{enumerate}
\end{theorem}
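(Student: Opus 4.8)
The plan is to fix a representative $u$ in each of the six cases (using Lemma~\ref{grazieli2} to normalize the first nonzero coordinate to $1$) and count $\widetilde{E}(u) = E(u)\cap\mathcal{D}_q$ directly. By definition $E(u)=\bigcup_{\lambda\in\mathbb{F}_q}B(\lambda u)$, and since $B(0u)=B(0)$ contributes only vectors of weight $\le 1$ — none of which lie in $\mathcal{D}_q$ — we may restrict to $\lambda\in\mathbb{F}_q^{*}$. So the first step is to write $\widetilde{E}(u)=\bigcup_{\lambda\in\mathbb{F}_q^{*}}\big(B(\lambda u)\cap\mathcal{D}_q\big)$ and, since by Lemma~\ref{grazieli2}(2) we have $\widetilde E(\lambda u)=\widetilde E(u)$, note that the sets $\widetilde B(\lambda u)=B(\lambda u)\cap\mathcal{D}_q$ partition nothing a priori — they may overlap — so I will need inclusion–exclusion over the $\lambda$'s, or a cleaner direct description of which vectors $w\in\mathcal{D}_q$ lie at Hamming distance $\le 1$ from \emph{some} scalar multiple of $u$.

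The cleanest approach is the direct one: a vector $w=(w_1,w_2,w_3)\in\mathcal{D}_q$ belongs to $E(u)$ iff there exists $\lambda\in\mathbb{F}_q^{*}$ with $d(w,\lambda u)\le 1$, i.e. $w$ agrees with $\lambda u$ in at least two coordinates. So the second step is, for each case, to enumerate the pairs of coordinate positions $\{i,j\}$ and count $w\in\mathcal{D}_q$ for which $w_i=\lambda u_i$ and $w_j=\lambda u_j$ for a suitable $\lambda$: given two such positions with $u_i,u_j$ both nonzero, $\lambda$ is forced (up to the constraint that $w_i=\lambda u_i$ determines $\lambda=w_iu_i^{-1}$), while the third coordinate $w_k$ ranges freely subject only to membership in $\mathcal{D}_q$ (pairwise distinct, nonzero). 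If some $u_i=0$, the analysis bifurcates because a zero coordinate of $\lambda u$ can never equal the corresponding coordinate of a vector in $\mathcal{D}_q$. This immediately yields the vanishing cases: if $\omega(u)=1$ then $\lambda u$ has at most one nonzero coordinate, so $d(w,\lambda u)\ge 2$ for all $w\in\mathcal{D}_q$, giving cases (1) and (4) (the latter when $\omega(u)=3,\delta(u)=1$, where $u$ is a scalar multiple of $(1,1,1)$, so every $\lambda u$ has $\delta=1$ and cannot be within distance $1$ of a vector with pairwise distinct entries unless two coordinates agree — forcing a repeat); case (2), $\omega(u)=2,\delta(u)=2$, means the two nonzero coordinates of $u$ are equal, so any $\lambda u$ that matches $w$ in those two positions forces $w_i=w_j$, contradicting $w\in\mathcal{D}_q$.

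For the three substantive cases (3), (5), (6) I would set up the count as a sum over the (at most three) choices of ``matching pair'' of positions, using inclusion–exclusion to remove double-counting of $w$ that happen to lie within distance $1$ of two different scalar multiples, or within distance $0$ (exact multiples $w=\lambda u$, which only occur when $u$ itself is in $\mathcal{D}_q$, i.e. case (6)). In case (3), $\omega(u)=2,\delta(u)=3$: say $u=(1,a,0)$ with $a\notin\{0,1\}$; the only viable matching pair is $\{1,2\}$ (position $3$ is zero in $\lambda u$), $\lambda$ is then free in $\mathbb{F}_q^{*}$ giving the matched coordinates $(\lambda,\lambda a)$, which are automatically distinct and nonzero, and $w_3$ must avoid $0,\lambda,\lambda a$ — that is $q-3$ choices — yielding $(q-1)(q-3)$ with no overcounting since distinct $\lambda$ give distinct matched pairs. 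Case (5), $\omega(u)=3,\delta(u)=2$: say $u=(1,1,a)$, $a\ne 0,1$; two positions carry the repeated value $1$ and one carries $a$, so a matching pair containing the two equal positions is impossible (it would force $w_1=w_2$), leaving the two pairs $\{1,3\}$ and $\{2,3\}$, each giving $(q-1)$ choices of $\lambda$ and then roughly $q-3$ choices for the free coordinate, with a small correction for the configurations counted in both — the arithmetic should collapse to $(q-1)(2q-6)$. Case (6), $\omega(u)=3,\delta(u)=3$, $u\in\mathcal{D}_q$: now all three pairs $\{1,2\},\{1,3\},\{2,3\}$ are viable, each contributing about $(q-1)(q-3)$, and I must carefully subtract the overlaps — vectors within distance $1$ of $\lambda u$ via two different matching pairs are exactly the exact multiples $w=\lambda u$ (distance $0$), of which there are $q-1$ — giving, after inclusion–exclusion, $3(q-1)(q-3)-2(q-1)=(q-1)(3q-11)$.

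The main obstacle is the bookkeeping in the inclusion–exclusion for cases (5) and (6): I must be scrupulous about (i) whether the ``free'' third coordinate can coincide with one of the two matched coordinates or with $0$, since those forbidden values themselves depend on $\lambda$, and (ii) exactly which $w$ are reached by more than one $(\lambda,\text{pair})$ combination — these are precisely the points lying at distance $0$ from a scalar multiple (the line itself) plus possibly a thin set of points equidistant (distance $1$) from two multiples, which one must check is empty or accounted for. Organizing this as ``(number of $(\lambda,\text{pair})$ incidences) minus (number of $w$ counted twice, times one)'' and verifying the twice-counted set is exactly $\{\lambda u:\lambda\in\mathbb{F}_q^{*}\}\cap\mathcal{D}_q$ is the crux; once that is pinned down, each formula follows by elementary counting.
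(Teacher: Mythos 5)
Your proposal is correct and arrives at all six formulas by the same basic route as the paper --- normalize $u$ via Lemma \ref{grazieli2}, then count $\widetilde{E}(u)$ directly --- but with a different bookkeeping scheme. The paper first isolates a disjointness lemma (Claims 1 and 2 inside its proof): in every nonvanishing case the sets $\widetilde{B}(\lambda u)$, $\lambda\in\mathbb{F}_q^{*}$, are pairwise disjoint, so $\vert\widetilde{E}(u)\vert=(q-1)\vert\widetilde{B}(u)\vert$; within the single ball it then partitions by which coordinate differs from $u$, requiring the free coordinate to avoid the corresponding entry of $u$ as well, so each form contributes $q-4$ and the center contributes $1$, yielding $3q-11$ with no inclusion--exclusion. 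You instead sum over (scalar, matching-pair) incidences at $q-3$ apiece and correct for overcounting; your identification of the multiply-counted vectors as exactly the scalar multiples $\lambda u$ (hit by all three pairs, hence overcounted by $2$) is right and gives $3(q-1)(q-3)-2(q-1)=(q-1)(3q-11)$. The one place you hedge is case (5), where you allow for ``a small correction for the configurations counted in both'' pairs: in fact that overlap is empty, since a vector reached via both $\{1,3\}$ and $\{2,3\}$ would satisfy $w_3=\lambda a=\mu a$, forcing $\lambda=\mu$ and then $w_1=w_2$, contradicting $w\in\mathcal{D}_q$; so the count is exactly $2(q-1)(q-3)$, consistent with your answer. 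The two decompositions are equivalent in substance; the paper's version packages the disjointness as a standalone fact that it reuses later (in Corollary \ref{banana001}), whereas yours keeps everything inside one inclusion--exclusion computation.
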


\begin{proof} {\bf Part 1}: if $\omega(u)=1$ and $\delta(u)=2$. Every scalar multiple of
$u$ contains at least two $0$. These multiples are not able to cover
any vector in  $\mathcal{D}_q$.

{\bf Part 2}: if $\omega(u)=2$ and $\delta(u)=2$. By Lemma
\ref{grazieli2}, we can assume $u=(0,1,1)$ without loss of
generality. Suppose for a contradiction that $v=(v_1,v_2,v_3) \in
\widetilde{E}(u)$, that is, there is $\lambda \in \mathbb{F}_q^{*}$
such that $d((0,\lambda,\lambda),(v_1,v_2,v_3)) \leq 1$. The
condition $(v_1,v_2,v_3) \in \mathcal{D}_q$ implies that $v_1 \neq
0$ and consequently $d((v_1,v_2,v_3),(0,\lambda,\lambda)) = 1$.
Since the vectors $(v_1,v_2,v_3)$ and $(0,\lambda,\lambda)$ differ
in the first coordinate, the absurd $v_2=v_3=\lambda$ is obtained.

{\bf Part 3}: if $\omega(u)=2$ and $\delta(u)=3$. Again by Lemma
\ref{grazieli2}, we can assume $u=(0,1,z)$, with $z \in
\mathbb{F}_q^{*}$ and $z \neq 1$. Let us prove first the following
statement.

\emph{Claim 1}: $\widetilde{B}(\lambda u) \cap
\widetilde{B}(\lambda' u) = \emptyset$ for distinct $\lambda,
\lambda' \in \mathbb{F}_q^{*}$.

Indeed, if $v=(v_1,v_2,v_3) \in \widetilde{B}(\lambda u)\cap
\widetilde{B}(\lambda' u)$, then $\lambda u$ and $v$ differ in the
first coordinate, and $\lambda' u$ and $v$ differ in the first
coordinate too, because $v_1 \neq 0$. Hence $v$ assumes both forms
$v=(v_1,\lambda,\lambda z)$ and $v=(v_1,\lambda',\lambda' z)$, and
$\lambda=\lambda'$. This leads an absurd.

If $v=(v_1,v_2,v_3) \in \widetilde{B}(\lambda u)$, then $v$ assumes
the form $v=(v_1,\lambda,\lambda z)$, where $v_1 \in
\mathbb{F}_q^{*}$, $v_1 \neq \lambda$ and $v_1 \neq \lambda z $.
Each one of these $(q-1)$ scalar multiples covers exactly $(q-3)$
vectors of $\mathcal{D}_q$. Claim 1 states that these $q-1$ sets are
pairwise disjoint, hence their union yields $\vert \widetilde{E}(u)
\vert =(q-1)(q-3)$.

Before proving the remaining parts, we need the following statement.

\emph{Claim 2}: Let $\omega(u)=3$. For distinct $\lambda, \lambda'
\in \mathbb{F}_q^{*}$, we claim that ${B}(\lambda u) \cap
{B}(\lambda' u)=\emptyset$. In particular,
\begin{equation}\label{isa}
\vert \widetilde{E}(u) \vert= (q-1) \vert \widetilde{B}(u) \vert.
\end{equation}

Suppose for a contradiction that ${B}(\lambda u) \cap {B}(\lambda'
u)\neq \emptyset$, thus the vectors $\lambda u$ and $\lambda' u$
agree in at least one coordinate, say $\lambda u_i=\lambda' u_i$.
Since $u_i \neq 0$, the condition $\lambda=\lambda'$ holds, which
leads an absurd.

{\bf Part 4}: if $\omega(u)=3$ and $\delta(u)=1$. Here $E(u) \cap
\mathcal{D}_q = \emptyset$, because each scalar multiple of $u$ has
three coincident coordinates too. There is not a vector of
$\mathcal{D}_q$ which is covered by some multiple of $u$.

{\bf Part 5}: if $\omega(u)=3$ and $\delta(u)=2$. By Lemma
\ref{grazieli2}, we can assume $u=(1,1,z)$ for some $z \in
\mathbb{F}_q^{*}$ and $z \neq 1$. Suppose that $v=(v_1,v_2,v_3) \in
\widetilde{B}(u)$. Since $v$ has three distinct coordinates and $u$
has two coincident coordinates, we obtain  $d(u,v) = 1$. The vector
$v$ assumes one of the
 forms: $(v_1,1,z)$ or $(1,v_2,z)$. Since $v_1,v_2 \in \mathbb{F}_q^{*}$ and $\{ v_1,v_2\} \cap \{1, z\} = \emptyset$, we can
choose $v_1$ and $v_2$ from $q-3$ distinct ways, thus $\vert
\widetilde{B}(u) \vert=2q-6$. Equation (\ref{isa}) implies $\vert
\widetilde{E}(u) \vert = (q-1)(2q-6)$.

{\bf Part 6}: if $\omega(u)=3$ and $\delta(u)=3$. We can choose
$u=(1,y,z)$, where $1$, $y \neq 0$, and $z \neq 0$ are pairwise
distinct. Let $v=(v_1,v_2,v_3)$ be a vector in $\widetilde{B}(u)$.
If $d(u,v)=0$ then $v=u$. If $d(u,v)=1$, then $v$ assumes one of the
forms: $(v_1,y,z)$, $(1,v_2,z)$, or $(1,y,v_3)$. Since
$\{v_1,v_2,v_3\} \cap \{0,1,y,z\}=\emptyset$, each variable $v_1$,
$v_2$, and $v_3$ can be chosen from $q-4$ possibilities. The
additive principle yields $\vert \widetilde{B}(u) \vert=3q-11$ and
Equation (\ref{isa}) concludes the counting.
\end{proof}

\section{Intersection of extended balls in $\mathcal{D}_q$} \label{Intersection}

Let us now focus on the behavior of $\widetilde{E}(u) \cap
\widetilde{E}(v)$, where $u,v$ are arbitrary vectors in
$\mathbb{F}_q^3$. Obviously $\vert \widetilde{E}(u)\cap
\widetilde{E}(v) \vert = 0$ whenever $\vert \widetilde{E}(u) \vert =
0$ or $\vert \widetilde{E}(v) \vert = 0$. On the other hand, if
$\vert \widetilde{E}(u) \cap \widetilde{E}(v) \vert \neq 0$, Theorem
\ref{bolas} implies that $u$ and $v$ must be of two types:
\begin{itemize}
  \item {\bf Type I}: vector of weight two with  three distinct coordinates.

  \item {\bf Type II}: vector of weight three.
\end{itemize}

It is well-known that  the additive group $\mathbb{Z}_{q-1}$ and the
multiplicative group $\mathbb{F}_q^{*}$ are isomorphic by the
relation $\overline{a} \mapsto \xi^a$, where $\xi$ denotes an
arbitrary generator of $\mathbb{F}_{q} ^{*}$. As usual, the class
$\overline{a} \in \mathbb{Z}_{q-1}$ is simply denoted by $a$, where
$0 \leq a \leq q-1$ and the multiplication follows the rule $ \xi^a
\xi^b=\xi^{a+b}=\xi^c, $ where $c=a+b$ in $\mathbb{Z}_{q-1}$.

\begin{example}\label{cachorro}
\emph{We illustrate here that the cardinality $\widetilde{E}(u) \cap
\widetilde{E}(v) $ can vary widely, even though $u$ and $v$ are
vectors of the type I or II.}

\emph{\begin{enumerate}
\item  Consider the vectors $u=(1,4,2)$ and $v=(1,3,4)$ in
$\mathbb{Z}_5^3$. Is is easy to see that $\vert \widetilde{E}(u)
\cap \widetilde{E}(v) \vert = 12$, since its intersection is formed
by
\[
\begin{array}{llll}
(1,3,2),(1,4,2),(3,4,2),(1,3,4),(2,3,4),(2,1,4),\\
(3,4,1),(3,2,1),(4,2,1),(2,1,3),(4,1,3),(4,2,3)
\end{array}.
\]
\item The intersection  $\widetilde{E}(u) \cap
\widetilde{E}(v)$ for $u=(1,2,4)$ and $v=(1,4,2)$ in
$\mathbb{Z}_5^3$ corresponds to
\[ \{ (1,4,3),(3,4,2),(1,3,4),(2,3,1),
(3,2,4),(4,2,1),(2,1,3),(4,1,2) \}.
\]
Thus $\vert \widetilde{E}(u) \cap \widetilde{E}(v) \vert = 8$.
\item
Take the vectors $u=(1,0,2)$ and $v=(1,2,3)$ in $\mathbb{Z}_5^3$. We
have
\[
\widetilde{E}(u) \cap \widetilde{E}(v) = \{
(1,3,2),(2,1,4),(3,4,1),(4,2,3) \}.
\]
\item Let $\xi$ be a primite element in $\mathbb{F}_4^{*}$.
Consider the vectors $u=(1,\xi^{1},\xi^{2})$ and
$v=(1,\xi^{2},\xi^{1})$ in $\mathbb{F}_4^3$. Checking case by case
the intersections $\widetilde{B}(\lambda u) \cap \widetilde{B}(\mu
v)$, where $\lambda,\mu \in \mathbb{F}_4^{*}$, is easy to see that
$\vert \widetilde{E}(u) \cap \widetilde{E}(v) \vert = 0$.
\item
Another pair of vectors which satisfies $\vert \widetilde{E}(u) \cap
\widetilde{E}(v) \vert = 0$ is $u=(1,2,4)$ and $v=(1,4,2)$ in
$\mathbb{Z}_7^3$.
\end{enumerate}}
\end{example}

\subsection{Intersection of extended balls reduced to intersection of balls} \label{aux}

\begin{remark}[Intersection of balls: a characterization]\label{uva}
\emph{Let $u=(u_1,u_2,u_3)$ and $v=(v_1,v_2,v_3)$ be arbitrary
vectors in $\mathbb{F}_q^3$. The set $B(u) \cap B(v)$ varies
according to the distance of the vectors, as follows: }

\emph{Case 1: if $d(u,v)=0$. Clearly $u=v$ and $B(u) \cap
B(v)=B(u)$.}

\emph{Case 2: if $d(u,v)=1$. There are three situations.
\begin{itemize}
  \item If $u_1 \neq v_1$, then $B(u) \cap B(v) = \{ (x,u_2,u_3) \ : \ x \in \mathbb{F}_q \}$.
  \item If $u_2 \neq v_2$, then $B(u) \cap B(v) = \{ (u_1,x,u_3) \ : \ x \in \mathbb{F}_q \}$.
  \item If $u_3 \neq v_3$,then $B(u) \cap B(v) = \{ (u_1,u_2,x) \ : \ x \in \mathbb{F}_q \}$.
\end{itemize}}

\emph{Case 3: if $d(u,v)=2$. Three subcases can hold.
\begin{itemize}
  \item If $u_1 = v_1$, then $B(u) \cap B(v) = \{ (u_1,v_2,u_3),(u_1,u_2,v_3) \}$.
  \item If $u_2 = v_2$, then $B(u) \cap B(v) = \{ (v_1,u_2,u_3),(u_1,u_2,v_3)  \}$.
  \item If $u_3 = v_3$, then $B(u) \cap B(v) = \{ (v_1,u_2,u_3),(u_1,v_2,u_3) \}$.
\end{itemize}}

\emph{Case 4: if $d(u,v)=3$, then clearly $B(u) \cap B(v) =
\emptyset$.}
\end{remark}

It is a little surprising that the computation of
 $\vert \widetilde{E}(u) \cap \widetilde{E}(v) \vert $ under the condition $u,v \in \mathcal{D}_q$ can be reduced to the
  cardinality of suitable intersections of balls. For this purpose, denote $\lambda
Z=\{\lambda z \, : \, z \in Z\}.$

\begin{lemma}\label{banana000}
Let $u$, $v$ be two vectors in $\mathbb{F}_q^3$, and $ \mu \in
\mathbb{F}_q$.
\begin{enumerate}
\item For every $\lambda \in \mathbb{F}_q^{*}$,
\[
 \widetilde{B}(\lambda u) \cap
\widetilde{B}(\mu v)  = \lambda [\widetilde{B}(u) \cap
\widetilde{B}(\lambda^{-1}\mu v)].
\]

\item  If the family $\{ \widetilde{B}(\lambda u) : \, \lambda \in \mathbb{F}_q^{*}\}$ is a partition of $\widetilde{E}(u)$ and
the family $\{ \widetilde{B}(\lambda v) : \, \lambda \in
\mathbb{F}_q^{*}\}$ is a partition of $\widetilde{E}(v)$, then
\[
\vert \widetilde{E}(u) \cap \widetilde{E}(v) \vert =
(q-1)\sum_{\mu\in\mathbb{F}_q^{*}}\vert
\widetilde{B}(u)\cap\widetilde{B}(\mu v)\vert.
\]
\end{enumerate}
\end{lemma}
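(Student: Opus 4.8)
The plan is to obtain Part 1 from two elementary invariance properties of the scaling map $x \mapsto \lambda x$ on $\mathbb{F}_q^3$ (with $\lambda \in \mathbb{F}_q^{*}$), and then to deduce Part 2 from Part 1 by a counting argument over the two given partitions.

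First I would record the two facts. Since $\lambda x_i = \lambda y_i$ if and only if $x_i = y_i$, multiplication by $\lambda$ is a distance-preserving bijection of $\mathbb{F}_q^3$; hence $B(\lambda w) = \lambda B(w)$ for every $w \in \mathbb{F}_q^3$ (including $w=0$), and the environment $\mathcal{D}_q$ is scale-invariant, i.e. $\lambda\mathcal{D}_q = \mathcal{D}_q$, because having pairwise distinct nonzero coordinates is preserved by scaling. Now, using $\mu v = \lambda(\lambda^{-1}\mu v)$ and $\widetilde{B}(w) = B(w)\cap\mathcal{D}_q$, Part 1 follows from the chain $\widetilde{B}(\lambda u)\cap\widetilde{B}(\mu v) = B(\lambda u)\cap B(\mu v)\cap\mathcal{D}_q = \lambda B(u)\cap\lambda B(\lambda^{-1}\mu v)\cap\lambda\mathcal{D}_q = \lambda\bigl[B(u)\cap B(\lambda^{-1}\mu v)\cap\mathcal{D}_q\bigr] = \lambda\bigl[\widetilde{B}(u)\cap\widetilde{B}(\lambda^{-1}\mu v)\bigr]$, where the middle equality pulls the bijection $x\mapsto\lambda x$ out of the intersection. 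Note this is uniform in $\mu\in\mathbb{F}_q$ (when $\mu=0$ both sides are empty, since $B(0)\cap\mathcal{D}_q=\emptyset$).

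For Part 2, I would intersect the partition $\widetilde{E}(u) = \bigsqcup_{\lambda\in\mathbb{F}_q^{*}}\widetilde{B}(\lambda u)$ with the partition $\widetilde{E}(v) = \bigsqcup_{\nu\in\mathbb{F}_q^{*}}\widetilde{B}(\nu v)$ and distribute, obtaining $\widetilde{E}(u)\cap\widetilde{E}(v) = \bigsqcup_{\lambda,\nu}\bigl[\widetilde{B}(\lambda u)\cap\widetilde{B}(\nu v)\bigr]$; this union is genuinely disjoint, since two blocks with different first index are disjoint because the $\widetilde{B}(\lambda u)$ are, and two blocks with equal first index and different second index are disjoint because the $\widetilde{B}(\nu v)$ are. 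Therefore $\vert\widetilde{E}(u)\cap\widetilde{E}(v)\vert = \sum_{\lambda}\sum_{\nu}\vert\widetilde{B}(\lambda u)\cap\widetilde{B}(\nu v)\vert$. By Part 1 and the fact that $x\mapsto\lambda x$ is a bijection, $\vert\widetilde{B}(\lambda u)\cap\widetilde{B}(\nu v)\vert = \vert\widetilde{B}(u)\cap\widetilde{B}(\lambda^{-1}\nu v)\vert$; and for each fixed $\lambda$, as $\nu$ ranges over $\mathbb{F}_q^{*}$ the product $\mu=\lambda^{-1}\nu$ also ranges over all of $\mathbb{F}_q^{*}$, so the inner sum equals $\sum_{\mu\in\mathbb{F}_q^{*}}\vert\widetilde{B}(u)\cap\widetilde{B}(\mu v)\vert$ independently of $\lambda$. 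Summing over the $q-1$ values of $\lambda$ gives the claimed identity.

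I do not expect a real obstacle: the whole content sits in the two scaling-invariance facts about balls and about $\mathcal{D}_q$. The only step deserving a moment of care is checking that the double union appearing in Part 2 is honestly disjoint, so that cardinalities add and the re-indexing $\nu\mapsto\lambda^{-1}\nu$ in the inner sum is legitimate.
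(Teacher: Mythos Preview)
Your proof is correct and follows essentially the same approach as the paper: Part~1 is the scaling-invariance of balls and of $\mathcal{D}_q$ (the paper writes it via $w=\lambda u+\alpha e_i$, you via $B(\lambda w)=\lambda B(w)$ and $\lambda\mathcal{D}_q=\mathcal{D}_q$, which is the same content), and Part~2 is the same double-union decomposition followed by the reindexing $\nu\mapsto\lambda^{-1}\nu$. Your organization is if anything slightly cleaner, since you argue the full disjointness of the double union in one step rather than splitting it into the paper's two separate disjointness checks (\ref{abacaxi}) and (\ref{amora}).
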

\begin{proof}
\textbf{Part 1}: Note that $w \in \widetilde{B}(\lambda u)\cap
\widetilde{B}(\mu v)$ if and only if there are scalars $\alpha ,
\beta \in \mathbb{F}_q$ and canonical vectors $e_i, \; e_j $ such
that $w=\lambda u + \alpha e_i$ and $w=\mu v + \beta e_j$. These
equalities are equivalent to $\lambda^{-1}w = u + \lambda^{-1}\alpha
e_i$ and $\lambda^{-1}w= \lambda^{-1}\mu v + \lambda^{-1}\beta e_j$,
that is, $\lambda^{-1} w \in \widetilde{B}(u) \cap
\widetilde{B}(\lambda^{-1} \mu v)$.

\textbf{Part 2}: Part 1 and the fact that $\lambda (\cup Z_i)=\cup
(\lambda Z_i)$ produce
\begin{eqnarray}
\nonumber \widetilde{E}(u) \cap \widetilde{E}(v) & = &
\bigcup_{\lambda \in \mathbb{F}_q^{*}}
 \bigcup_{\mu \in \mathbb{F}_q^{*}}\widetilde{B}(\lambda u)\cap \widetilde{B}(\mu
 v) \nonumber\\
 & = &  \bigcup_{\lambda \in \mathbb{F}_q^{*}} \bigcup_{\mu \in \mathbb{F}_q^{*}} \lambda[\widetilde{B}(u)\cap \widetilde{B}(\lambda^{-1} \mu v)] = \nonumber \\
                                       & = &  \bigcup_{\lambda \in \mathbb{F}_q^{*}} \lambda
                                       \left[ \bigcup_{\mu \in \mathbb{F}_q^{*}}\widetilde{B}(u)\cap \widetilde{B}(\lambda^{-1} \mu v)\right]
                                        =\nonumber\\
                                        & = & \bigcup_{\lambda \in \mathbb{F}_q^{*}} \lambda
                                        \left[ \bigcup_{\mu \in \mathbb{F}_q^{*}}\widetilde{B}(u)\cap \widetilde{B}(\mu
                                        v)\right]. \label{abacate}
\end{eqnarray}
Since the sets in $\{ \widetilde{B}(\lambda u) : \, \lambda \in
\mathbb{F}_q^{*}\}$ are pairwise disjoint, we claim that
\begin{equation}\label{abacaxi}
\left\vert \bigcup_{\lambda \in \mathbb{F}_q^{*}}
 \lambda \left[ \bigcup_{\mu \in \mathbb{F}_q^{*}}\widetilde{B}(u)\cap \widetilde{B}(\mu v)\right] \right\vert =
  \sum_{\lambda \in \mathbb{F}_q^{*}} \left\vert \lambda\left[ \bigcup_{\mu \in \mathbb{F}_q^{*}}\widetilde{B}(u)\cap \widetilde{B}(\mu
  v)\right]\right\vert.
\end{equation}
Indeed, if there is $x \in \lambda \left[ \bigcup_{\mu \in
\mathbb{F}_q^{*}}\widetilde{B}(u)\cap \widetilde{B}(\mu v)\right]
\cap \lambda' \left[ \bigcup_{\mu \in
\mathbb{F}_q^{*}}\widetilde{B}(u)\cap \widetilde{B}(\mu v)\right]$
for $\lambda \neq \lambda'$, then in particular, $x \in
\widetilde{B}(\lambda u) \cap \widetilde{B}( \lambda' u)
=\emptyset$, an absurd.

For $\lambda \in \mathbb{F}_q^{*}$,
\begin{equation}\label{amora}
 \left\vert \lambda\left[
\bigcup_{\mu \in \mathbb{F}_q^{*}}\widetilde{B}(u)\cap
\widetilde{B}(\mu
  v)\right]\right\vert =
\left\vert \bigcup_{\mu \in\mathbb{F}_q^{*}}\widetilde{B}(u)\cap
\widetilde{B}(\mu v)\right\vert=\sum_{\mu \in \mathbb{F}_q^{*}}
\vert \widetilde{B}(u) \cap \widetilde{B}(\mu v) \vert,
\end{equation}
since $\widetilde{B}(u)\cap\widetilde{B}(\mu v) \subset
\widetilde{B}(\mu v)$ for all $\mu \in \mathbb{F}_q^{*}$, and the
sets in $\{ \widetilde{B}(\lambda v) : \; \lambda \in
\mathbb{F}_q^{*}\}$ are pairwise disjoint. From (\ref{abacate}),
(\ref{abacaxi}) and (\ref{amora}), we conclude
\[
\vert \widetilde{E}(u) \cap \widetilde{E}(v) \vert = \sum_{\lambda
\in \mathbb{F}_q^{*}} \left( \sum_{\mu \in \mathbb{F}_q^{*}} \vert
\widetilde{B}(u) \cap \widetilde{B}(\mu v) \vert \right) = (q-1)
\sum_{\mu \in \mathbb{F}_q^{*}} \vert \widetilde{B}(u) \cap
\widetilde{B}(\mu v) \vert.
\]
\end{proof}

\subsection{An auxiliary parameter}

\begin{definition}
\emph{Given arbitrary vectors $u$ and $v \in \mathbb{F}_q^3$, define
\[
\rho_q(u,v) =\left\{
\begin{array}{ll}
0 & \mbox{ if } \vert \widetilde{E}(u) \vert = 0$ { \mbox or } $\vert \widetilde{E}(v)\vert = 0\\
\sum_{\mu\in\mathbb{F}_q^{*}}\vert
\widetilde{B}(u)\cap\widetilde{B}(\mu v)\vert & \mbox{ otherwise.}
\end{array}
\right.
\]}
\end{definition}

\begin{corollary} \label{banana001}
For arbitrary vectors $u,$ $v$ in $\mathbb{F}_q^3$,
\[
\vert \widetilde{E}(u) \cap \widetilde{E}(v) \vert =
\rho_q(u,v)(q-1).
\]
\end{corollary}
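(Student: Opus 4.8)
The plan is to deduce this corollary directly from Lemma \ref{banana000}(2), once the partition hypothesis appearing there is verified for the vectors that matter. First I would dispose of the degenerate case: if $\vert \widetilde{E}(u) \vert = 0$ or $\vert \widetilde{E}(v) \vert = 0$, then $\widetilde{E}(u) \cap \widetilde{E}(v) = \emptyset$ while $\rho_q(u,v) = 0$ by definition, so both sides of the asserted identity are $0$.

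In the remaining case $\vert \widetilde{E}(u) \vert \neq 0 \neq \vert \widetilde{E}(v) \vert$, Theorem \ref{bolas} forces each of $u$ and $v$ to be of Type I (weight two with three pairwise distinct coordinates) or of Type II (weight three). I would then check the two hypotheses of Lemma \ref{banana000}(2): that $\{\widetilde{B}(\lambda u) : \lambda \in \mathbb{F}_q^{*}\}$ is a partition of $\widetilde{E}(u)$, and likewise for $v$. The ``covering'' half is immediate, since $B(0)$ contains only vectors of weight at most $1$, so $\widetilde{B}(0) = B(0) \cap \mathcal{D}_q = \emptyset$ and therefore
\[
\widetilde{E}(u) = E(u) \cap \mathcal{D}_q = \bigcup_{\lambda \in \mathbb{F}_q} \widetilde{B}(\lambda u) = \bigcup_{\lambda \in \mathbb{F}_q^{*}} \widetilde{B}(\lambda u).
\]
The ``pairwise disjoint'' half is exactly what Claim 1 (for Type I) and Claim 2 (for Type II) in the proof of Theorem \ref{bolas} establish; I would note that their arguments — exhibiting a coordinate $i$ with $u_i \neq 0$ in which $\lambda u$ and $\lambda' u$ must agree, so that $\lambda u_i = \lambda' u_i$ forces $\lambda = \lambda'$ — apply to any Type I or Type II vector, not just to the normalized representatives used there, so no extra work is needed.

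Finally I would feed this into Lemma \ref{banana000}(2) to obtain
\[
\vert \widetilde{E}(u) \cap \widetilde{E}(v) \vert = (q-1)\sum_{\mu \in \mathbb{F}_q^{*}} \vert \widetilde{B}(u) \cap \widetilde{B}(\mu v) \vert,
\]
and recognize the sum as $\rho_q(u,v)$, by the ``otherwise'' branch of its definition. The whole argument is short and contains no real calculation; the only point needing care is matching the partition hypothesis of Lemma \ref{banana000}(2) to the disjointness statements already proved, so I expect ``verifying the hypotheses of Lemma \ref{banana000}(2)'' to be the main (and mild) obstacle.
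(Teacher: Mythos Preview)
Your proposal is correct and follows essentially the same approach as the paper's own proof: handle the degenerate case trivially, observe that otherwise $u$ and $v$ are of Type I or II, invoke Claims 1 and 2 from the proof of Theorem \ref{bolas} for the disjointness hypothesis, and conclude via Lemma \ref{banana000}(2). Your version is in fact slightly more detailed than the paper's, in that you spell out why $\widetilde{B}(0)=\emptyset$ (so the union over $\mathbb{F}_q^{*}$ suffices) and you note that the disjointness arguments extend to arbitrary Type I/II vectors rather than just the normalized representatives.
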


\begin{proof}
The statement is obvious when $\vert \widetilde{E}(u) \vert = 0$ or
$\vert \widetilde{E}(v) \vert = 0$. Otherwise,  $\vert
\widetilde{E}(u) \vert \neq 0$, $\vert \widetilde{E}(v) \vert \neq
0$, and both vectors are of type I or II. Hence
$\widetilde{B}(\lambda u) \cap \widetilde{B}(\lambda' u) =
\emptyset$ and $\widetilde{B}(\lambda v) \cap \widetilde{B}(\lambda'
v) = \emptyset$ for all $\lambda \neq \lambda'$ in
$\mathbb{F}_q^{*}$, according to Claims 1 and 2 of the proof in
Theorem \ref{bolas}. Lemma \ref{banana000} concludes the statement.
\end{proof}

\begin{example}\label{ozzy}
\emph{Example \ref{cachorro} and Corollary \ref{banana001}
illustrate a few sharp values:
$$
\begin{array}{llll}
\rho_5((1,4,2),(1,3,4))=3, & \rho_5((1,2,4),(1,4,2))=2, & \rho_5((1,0,2),(1,2,3))=1, \\
\rho_4((1,\xi^{1},\xi^{2}),(1,\xi^{2},\xi^{1}))=0, &
\rho_7((1,2,4),(1,4,2))=0.
\end{array}
$$}
\end{example}

\subsection{The computation of $\rho_q(u,v)$}

In this subsection we are concerned with the computation of
$\rho_q(u,v)$ for arbitrary vectors $u,v \in \mathcal{D}_q$.

\begin{example}
\emph{Let $u=(2,0,5)$ and $v=(6,7,9)$ be vectors in
$\mathbb{Z}_{11}^3$. Since $u$ is a vector of the type I and $v$ is
a vector of type II, Corollary \ref{banana001} yields
$\rho_{11}(u,v) = \sum_{\mu \in \mathbb{Z}_{11}^{*}} \vert
\widetilde{B}(u) \cap \widetilde{B}(\mu v) \vert$. Clearly $\vert
\widetilde{B}(u) \cap \widetilde{B}(\mu v) \vert = 0$ when $d(u,\mu
v) = 3$. The scalars $\mu$ which satisfy $d(u,\mu v) \leq 2$ are:
$0$, $3$, and $4$. Thus
\[
\rho_{11}(u,v) = \vert \widetilde{B}(u) \cap \widetilde{B}(0) \vert
+ \vert \widetilde{B}(u) \cap \widetilde{B}(3 v) \vert + \vert
\widetilde{B}(u) \cap \widetilde{B}(4 v) \vert.
\]
A simple inspection reveals that $ \widetilde{B}(u) \cap
\widetilde{B}(0) = \emptyset$, $\widetilde{B}(u) \cap
\widetilde{B}(3 v)=\{ (2,10,5)\}$, and $\widetilde{B}(u) \cap
\widetilde{B}(4 v) = \{ (2,6,5)\}.$ Therefore
$\rho_{11}((2,0,5),(6,7,9))=2$.}
\end{example}

The example above illustrates a curious but important fact: in order
to compute $\rho_q(u,v),$ we do not have to verify $\vert
\widetilde{B}(u) \cap \widetilde{B}(\mu v) \vert$ for all $\mu \in
\mathbb{F}_q^{*}$. Indeed, it is sufficient to evaluate $\vert
\widetilde{B}(u) \cap \widetilde{B}(\mu v) \vert$ for at most three
instances of $\mu \in \mathbb{F}_q^{*}$, according to the next
result.

\begin{lemma}\label{estranged}
Given $u=(\xi^{a},\xi^{b},\xi^{c})$ and
$v=(\xi^{d},\xi^{e},\xi^{f})$ in $\mathcal{D}_q$,
\[
\rho_q(u,v) = \vert \widetilde{B}(u) \cap \widetilde{B}(\xi^{a-d}v)
\vert + \vert \widetilde{B}(u) \cap \widetilde{B}(\xi^{b-e}v) \vert
+ \vert \widetilde{B}(u) \cap \widetilde{B}(\xi^{c-f}v) \vert.
\]
\end{lemma}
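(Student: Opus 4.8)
The key observation is that $\rho_q(u,v)$ is a sum over $\mu \in \mathbb{F}_q^*$ of the terms $|\widetilde{B}(u) \cap \widetilde{B}(\mu v)|$, and by Remark \ref{uva} such a term can be nonzero only when $d(u,\mu v) \le 2$, i.e.\ only when $u$ and $\mu v$ agree in at least one coordinate. So the plan is to show that the scalars $\mu$ for which $u_i = \mu v_i$ for some coordinate $i$ are exactly $\mu \in \{\xi^{a-d},\ \xi^{b-e},\ \xi^{c-f}\}$, and that no other value of $\mu$ contributes. Writing $u=(\xi^a,\xi^b,\xi^c)$ and $v=(\xi^d,\xi^e,\xi^f)$ (legitimate since $u,v \in \mathcal{D}_q$ have all coordinates nonzero, so each lies in a power of the generator $\xi$), the equation $u_i = \mu v_i$ in coordinate $1$ reads $\xi^a = \mu \xi^d$, forcing $\mu = \xi^{a-d}$; similarly coordinates $2$ and $3$ force $\mu = \xi^{b-e}$ and $\mu = \xi^{c-f}$ respectively.

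First I would record that, since $u,v \in \mathcal{D}_q$, all six coordinates are units, and for any coordinate $i$ the equation $u_i = \mu v_i$ has the unique solution $\mu = u_i v_i^{-1}$; in the $\xi$-exponent notation these three solutions are $\xi^{a-d}$, $\xi^{b-e}$, $\xi^{c-f}$. Next I would invoke Remark \ref{uva}: for any $\mu$ with $d(u,\mu v) = 3$ we have $B(u) \cap B(\mu v) = \emptyset$, hence $\widetilde{B}(u) \cap \widetilde{B}(\mu v) = \emptyset$, so such $\mu$ contributes $0$ to the sum defining $\rho_q(u,v)$. Therefore, if $\rho_q(u,v) \ne 0$ (so that both extended balls are nonempty and the sum-formula of Corollary \ref{banana001} / the definition applies),
\[
\rho_q(u,v) = \sum_{\mu \in \mathbb{F}_q^*} |\widetilde{B}(u) \cap \widetilde{B}(\mu v)| = \sum_{\substack{\mu \in \mathbb{F}_q^* \\ d(u,\mu v) \le 2}} |\widetilde{B}(u) \cap \widetilde{B}(\mu v)|,
\]
and the surviving $\mu$'s all lie in $\{\xi^{a-d},\xi^{b-e},\xi^{c-f}\}$ by the previous paragraph. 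Conversely, for each of these three specific values of $\mu$ the term $|\widetilde{B}(u) \cap \widetilde{B}(\mu v)|$ is a well-defined nonnegative integer (possibly $0$), so adding them all back in changes nothing; this gives the claimed three-term expression.

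The one subtlety to handle carefully is multiplicity: the three exponents $\xi^{a-d}$, $\xi^{b-e}$, $\xi^{c-f}$ need not be distinct. If, say, $\xi^{a-d} = \xi^{b-e}$, then in the genuine sum $\sum_{\mu}$ that common value of $\mu$ is counted once, whereas the right-hand side of the lemma counts $|\widetilde{B}(u) \cap \widetilde{B}(\xi^{a-d}v)|$ twice. I would address this by arguing that whenever two (or three) of the exponents coincide, the corresponding coincidence of coordinate-ratios forces $d(u,\mu v) \le 1$ to fail the membership condition for $\mathcal{D}_q$ — concretely, $u_i v_i^{-1} = u_j v_j^{-1}$ together with the structure of type I/II vectors forces the relevant $\widetilde{B}(u) \cap \widetilde{B}(\mu v)$ to be empty (a multiple of $v$ agreeing with $u$ in two coordinates while $v$ itself has distinct coordinates leads to a contradiction with $u \in \mathcal{D}_q$, in the spirit of Claims 1 and 2 in the proof of Theorem \ref{bolas}). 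Hence any repeated term contributes $0$ and the double-counting is harmless. Also the degenerate case $\rho_q(u,v)=0$ (one extended ball empty) is immediate since then every $\widetilde{B}(u)$ or $\widetilde{B}(\mu v)$ relevant is empty and the right side vanishes too. The main obstacle is precisely this bookkeeping of coincident exponents; once that is dispatched, the identity is just Remark \ref{uva} applied termwise.
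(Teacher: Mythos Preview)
Your overall approach---drop all $\mu$ with $d(u,\mu v)=3$ via Remark~\ref{uva}, then observe that the surviving $\mu$ are exactly the three coordinate ratios $\xi^{a-d},\xi^{b-e},\xi^{c-f}$---is precisely what the paper does. The paper's proof stops there and does not discuss what happens when two of these three scalars coincide.

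You are right to flag the multiplicity issue, but your proposed resolution is wrong. You claim that whenever, say, $\xi^{a-d}=\xi^{b-e}$, the term $|\widetilde{B}(u)\cap\widetilde{B}(\xi^{a-d}v)|$ must vanish. This is false. Take $u=(1,\xi^{a},\xi^{b})$ and $v=(1,\xi^{a},\xi^{c})$ in $\mathcal{D}_q$ with $a,b,c$ pairwise distinct and nonzero (exactly the situation of item~(ii) in Theorem~\ref{icarus}). Here the first two coordinate ratios are both $1$, so $\mu=1$ is repeated. But $d(u,v)=1$, and by Remark~\ref{uva} we have $B(u)\cap B(v)=\{(1,\xi^{a},x):x\in\mathbb{F}_q\}$, whence $\widetilde{B}(u)\cap\widetilde{B}(v)$ has exactly $q-3>0$ elements. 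So the repeated term is nonzero, and the literal three-term sum on the right-hand side of the lemma would overcount $\rho_q(u,v)$ by $q-3$. Your appeal to ``the spirit of Claims~1 and~2'' does not apply: those claims concern two scalar multiples of the \emph{same} vector, not of two different vectors $u$ and $v$.

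In short, the formula in the lemma is only literally correct when the three scalars $\xi^{a-d},\xi^{b-e},\xi^{c-f}$ are pairwise distinct; when they are not, the right-hand side must be read as a sum over the \emph{set} of these values. The paper's own applications (e.g.\ item~(ii) of Theorem~\ref{icarus}, where only two terms are written) use this set interpretation without comment. Your core argument is fine; only the multiplicity patch needs to be replaced by this observation.
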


\begin{proof}
Since $u$ and $v$ are vectors of type II, Corollary \ref{banana001}
implies
\[
\rho_q(u,v) = \sum_{\mu\in\mathbb{F}_q^{*}}\vert
\widetilde{B}(u)\cap\widetilde{B}(\mu v)\vert.
\]
We analyze now the contribution of each scalar ${\mu}$. For a scalar
$\mu$ such that $d(u,\mu v)=3$, Remark \ref{uva} implies $\vert B(u)
\cap B(\mu v) \vert = 0$, and consequently  $\vert \widetilde{B}(u)
\cap \widetilde{B}(\mu v) \vert = 0$. It remains the case where
$d(u,\mu v) \leq 2$, which produces the following possibilities for
$\mu $: $\xi^{a-d}$, $\xi^{b-e}$, and $\xi^{c-f}$.
\end{proof}

\begin{theorem}\label{icarus}
Let $q$ be a prime power and $u,v \in \mathcal{D}_q$. The following
characterization holds:
\begin{enumerate}
  \item $\rho_q(u,v)=0$ if and only if $u=\lambda (1,\xi^{a},\xi^{b})$, $v=\mu (1,\xi^{b},\xi^{a})$ for some $\lambda,\mu \in \mathbb{F}_q^{*}$,
  where $a$, $b$ are distinct, non-zero, $2a=b$, and $2b=a$.
  \item $\rho_q(u,v) \geq 2$, otherwise.
\end{enumerate}
\end{theorem}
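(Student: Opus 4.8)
The plan is to classify, up to the symmetry group $S_3 \times K$ of Lemma \ref{grazieli2}, all pairs $(u,v)$ with $u,v \in \mathcal{D}_q$, and to compute $\rho_q(u,v)$ via Lemma \ref{estranged}. First I would normalize: using the $K$-action I may assume both $u$ and $v$ have first coordinate $1$, writing $u=(1,\xi^{a},\xi^{b})$ and $v=(1,\xi^{d},\xi^{e})$ with $a,b,d,e$ such that $a,b,0$ are pairwise distinct and likewise $d,e,0$. By Lemma \ref{estranged},
\[
\rho_q(u,v) = \vert \widetilde{B}(u) \cap \widetilde{B}(v) \vert + \vert \widetilde{B}(u) \cap \widetilde{B}(\xi^{a-d}v) \vert + \vert \widetilde{B}(u) \cap \widetilde{B}(\xi^{b-e}v) \vert,
\]
where the three scalars $1$, $\xi^{a-d}$, $\xi^{b-e}$ are precisely the ones bringing $v$ to agree with $u$ in the first, second, third coordinate respectively. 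For each of the three scalars $\mu$ I would use Remark \ref{uva} (the characterization of $B(u)\cap B(v)$ by Hamming distance) to compute $\vert B(u)\cap B(\mu v)\vert$, and then intersect with $\mathcal{D}_q$ to get $\vert \widetilde B(u)\cap\widetilde B(\mu v)\vert$; the latter step just discards those of the (at most two) listed points that have a repeated or zero coordinate.

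Next I would organize the case analysis by how many of the three distinguished scalars coincide, equivalently by the coincidence pattern among the differences $a-d$, $b-e$, and $0$. The generic situation is that $1,\xi^{a-d},\xi^{b-e}$ are three distinct scalars; then $d(u,\mu v)=2$ for each of them, each contributes at most $2$ to the sum by Remark \ref{uva} Case 3, and I must show that at least one of the three contributes a point lying in $\mathcal{D}_q$ — in fact I expect to show the total is always $\ge 2$ unless the exceptional configuration of item 1 occurs. When two of the scalars coincide, say $\xi^{a-d}=\xi^{b-e}$, then $d(u,\mu v)\le 1$ for that scalar and Case 2 of Remark \ref{uva} gives an entire coordinate line $\{(\ast,u_2,u_3)\}$ (or the analogous line) inside $B(u)\cap B(\mu v)$; intersecting with $\mathcal{D}_q$ leaves $q-3$ or $q-4$ points, which for $q\ge 4$ is $\ge 1$, and since this already forces $\rho_q(u,v)\ge 1$ one checks the remaining scalar pushes it to $\ge 2$. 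The case where the two vectors are scalar multiples of one another (all three scalars equal) gives $\rho_q(u,v)=\vert\widetilde B(u)\vert/(q-1)\ge 1$ by Theorem \ref{bolas}, again with an easy bump to $\ge 2$.

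The delicate point — and the main obstacle — is isolating exactly when the generic three-distinct-scalars case collapses to $\rho_q(u,v)=0$. Here each $B(u)\cap B(\mu v)$ is a two-point set of the shape $\{(u_1,v_2,u_3),(u_1,u_2,v_3)\}$ (with indices rotated), and $\rho_q(u,v)=0$ forces each of the (at most) six resulting points to have a repeated or zero coordinate. I would set up the coordinate conditions these six exclusions impose on $a,b,d,e$ and solve the resulting system in $\mathbb{Z}_{q-1}$ (together with the "pairwise distinct and nonzero" constraints on the exponents). After a short computation this system should collapse to $d=b$, $e=a$, $2a=b$, $2b=a$ — i.e. $v=\mu(1,\xi^{b},\xi^{a})$ with the stated arithmetic relations — matching item 1; conversely, for such a pair one checks directly (as in Example \ref{cachorro}(4)) that all six points fall outside $\mathcal{D}_q$, so $\rho_q(u,v)=0$. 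Everything else falls under item 2, and the bound $\rho_q(u,v)\ge 2$ follows by exhibiting two explicit points of $\widetilde B(u)\cap\widetilde B(\mu v)$ in each surviving case. The bookkeeping over the symmetry reductions and the exclusion system is the part that needs care; the individual cardinality computations are routine once Remark \ref{uva} is invoked.
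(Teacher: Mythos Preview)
Your plan matches the paper's proof: normalize the first coordinate to $1$ via the $K$-action, apply Lemma~\ref{estranged} to reduce $\rho_q(u,v)$ to three terms, evaluate each via Remark~\ref{uva}, and case-split; the paper organizes its cases (i)--(v) by which exponents of $v$ coincide with those of $u$, which is the same partition as your scalar-coincidence scheme, and in each surviving case it exhibits two explicit points of $\mathcal{D}_q$ just as you propose. One small slip to fix: when $v$ is a scalar multiple of $u$, the sum defining $\rho_q$ collapses to the single term $|\widetilde B(u)|=3q-11$ (not $|\widetilde B(u)|/(q-1)$), which is already $\ge 2$ for every $q\ge 5$ without any further ``bump.''
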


\begin{proof}
{\bf Part 1}: We can assume without lost of generality that $u$ and
$v$ have the first coordinate equal to $1$, that is,
$u=(1,\xi^{a},\xi^{b})$, $v=(1,\xi^{b},\xi^{a})$, where $a$, $b$ are
distinct, non-zero, $2a=b$ and $2b=a$. By Lemma \ref{estranged},
\[
\rho_q(u,v) = \vert \widetilde{B}(u) \cap \widetilde{B}(v) \vert +
\vert \widetilde{B}(u) \cap \widetilde{B}(\xi^{a-b}v) \vert + \vert
\widetilde{B}(u) \cap \widetilde{B}(\xi^{b-a}v) \vert.
\]
The characterization in Remark \ref{uva} shows us that
\begin{eqnarray*}
B(u) \cap B(v)           & = & \{ (1,\xi^{b},\xi^{b}),(1,\xi^{a},\xi^{a}) \} ,\\
B(u) \cap B(\xi^{a-b} v)  & = & \{(\xi^{a-b},\xi^{a},\xi^{b}),(1,\xi^{a},\xi^{2a-b}) \},\\
B(u) \cap B(\xi^{b-a} v)  & = &
\{(\xi^{b-a},\xi^{a},\xi^{b}),(1,\xi^{2b-a},\xi^{b}) \}.
\end{eqnarray*}
Since $2a=b$ and $2b=a$, we obtain $\widetilde{B}(u) \cap
\widetilde{B}(v) = \emptyset$, $\widetilde{B}(u) \cap
\widetilde{B}(\xi^{a-b}v) = \emptyset$, and $\widetilde{B}(u) \cap
\widetilde{B}(\xi^{b-a}v)= \emptyset$. Therefore $\rho_q (u,v) = 0$.

{\bf Part 2}: It is enough to prove that $\rho_q(u,v) \geq 2$ for
the following situations:
\begin{itemize}
  \item[(i)] $u=(1,\xi^a,\xi^b)$, $v=(1,\xi^c,\xi^d)$,
  \item[(ii)]
$u=(1,\xi^a,\xi^b)$, $v=(1,\xi^a,\xi^c)$,
  \item[(iii)] $u=(1,\xi^a,\xi^b)$,
$v=(1,\xi^c,\xi^b)$,
  \item[(iv)] $u=(1,\xi^a,\xi^b)$, $v=(1,\xi^c,\xi^a)$,
  \item[(v)] $u=(1,\xi^a,\xi^b)$, $v=(1,\xi^b,\xi^a)$, with $2a \neq
  b$ or $2b \neq a$,
\end{itemize}
where the elements $a,b,c,d \in \mathbb{Z}_{q-1}$ are pairwise
distinct and non-zero.

{\it Item (i)}: Here $ \rho_q(u,v)  = \vert \widetilde{B}(u) \cap
\widetilde{B}(v) \vert + \vert \widetilde{B}(u) \cap
\widetilde{B}(\xi^{a-c}v) \vert + \vert \widetilde{B}(u) \cap
\widetilde{B}(\xi^{b-d}v) \vert$. Since $d(u,v)=2$, it follows by
Remark \ref{uva} that
\[
B(u) \cap B(v)          = \{ (1,\xi^{c},\xi^{b}),
(1,\xi^{a},\xi^{d}) \},
\]
which is a subset of $\mathcal{D}_q$, thus $\rho_q (u,v)\geq 2$.

{\it Item (ii)}: Note that $\rho_q(u,v)  = \vert \widetilde{B}(u)
\cap \widetilde{B}(v) \vert + \vert \widetilde{B}(u) \cap
\widetilde{B}(\xi^{b-c}v) \vert$. Apply Remark \ref{uva} when
$d(u,v)=1$. Since
\[
B(u) \cap B(v)         = \{ (1,\xi^{a},x) \ : \ x \in \mathbb{F}_q \} ,\\
\]
is a subset of $\mathcal{D}_q$, $\vert \widetilde{B}(u) \cap
\widetilde{B}(v) \vert =q-3$ holds, and $\rho_q(u,v) \geq q-3$
follows as a consequence.

{\it Item (iii)} This case can be proved as an immediate consequence
of item (ii) and the concept of $\mathbb{F}_q$-equivalence.

{\it Item (iv)} Lemma \ref{estranged} implies
$$\rho_q(u,v)  = \vert
\widetilde{B}(u) \cap \widetilde{B}(v) \vert + \vert
\widetilde{B}(u) \cap \widetilde{B}(\xi^{a-c}v) \vert + \vert
\widetilde{B}(u) \cap \widetilde{B}(\xi^{b-a}v) \vert. $$
 We consider two cases.

Case 1: if $a-c \neq b-a$. Since $d(u,v)=2$, $d(u,\xi^{a-c}v)=2$,
and $d(u,\xi^{b-a}v)=2$, Remark \ref{uva} gives us
\begin{eqnarray*}
B(u) \cap B(v)           & = & \{ (1,\xi^{c},\xi^{b}),(1,\xi^{a},\xi^{a}) \} ,\\
B(u) \cap B(\xi^{a-c} v)  & = & \{(\xi^{a-c},\xi^{a},\xi^{b}),(1,\xi^{a},\xi^{2a-c}) \},\\
B(u) \cap B(\xi^{b-a} v)  & = &
\{(\xi^{b-a},\xi^{a},\xi^{b}),(1,\xi^{b+c-a},\xi^{b}) \}.
\end{eqnarray*}
We still need to analyze more two subcases. It is easy to check that
$\vert \widetilde{B}(u) \cap \widetilde{B}(v) \vert =1$ for all
subcases below.

Subcase 1.1: If $2a \neq c$, then $(1,\xi^{a},\xi^{2a-c}) \in
\mathcal{D}_q$ and $\vert \widetilde{B}(u) \cap
\widetilde{B}(\xi^{a-c} v) \vert \geq 1$.

Subcase 1.2: If $2a = c$, then $(\xi^{b-a},\xi^{a},\xi^{b}) \in
\mathcal{D}_q$ and $\vert \widetilde{B}(u) \cap
\widetilde{B}(\xi^{b-a} v) \vert \geq 1$.

Therefore $\rho_q(u,v) \geq 2$ in both subcases.

Case 2: if $a-c = b-a$. Here $d(u,\xi^{a-c}v)=1$. By Remark
\ref{uva},
\[
B(u) \cap B(\xi^{a-c} v)  = \{(x,\xi^{a},\xi^{b}) \ : \ x \in
\mathbb{F}_q \}.
\]
Thus $\vert \widetilde{B}(u) \cap \widetilde{B}(v) \vert =1$ and
$\vert \widetilde{B}(u) \cap \widetilde{B}(\xi^{a-c}v) \vert = q-3$,
which implies $\rho_q(u,v) \geq q-3$.

{\it Item (v)} In this case,
$$\rho_q(u,v)  = \vert \widetilde{B}(u)
\cap \widetilde{B}(v) \vert + \vert \widetilde{B}(u) \cap
\widetilde{B}(\xi^{a-b}v) \vert + \vert \widetilde{B}(u) \cap
\widetilde{B}(\xi^{b-a}v) \vert.
$$
 We divide the proof into two cases.

Case 1: if $a-b \neq b-a$. Since $d(u,\xi^{a-b}v)=2$ and
$d(u,\xi^{b-a}v)=2$, Remark \ref{uva} implies
\begin{eqnarray*}
B(u) \cap B(\xi^{a-b} v)  & = & \{(\xi^{a-b},\xi^{a},\xi^{b}),(1,\xi^{a},\xi^{2a-b}) \},\\
B(u) \cap B(\xi^{b-a} v)  & = &
\{(\xi^{b-a},\xi^{a},\xi^{b}),(1,\xi^{2b-a},\xi^{b}) \}.
\end{eqnarray*}

Subcase 1.1: if $2a \neq b$. The vectors $(1,\xi^{a},\xi^{2a-b})$
and $(\xi^{b-a},\xi^{a},\xi^{b})$ belong to $\mathcal{D}_q$. Hence
$\vert \widetilde{B}(u) \cap \widetilde{B}(\xi^{a-b}v) \vert \geq 1$
and  $\vert \widetilde{B}(u) \cap \widetilde{B}(\xi^{b-a}v) \vert
\geq 1$. It means that $\rho_q(u,v) \geq 2$.

Subcase 1.2: if $2b \neq a$. The vectors
$(\xi^{a-b},\xi^{a},\xi^{b})$ and $(1,\xi^{2b-a},\xi^{b})$ belong to
$\mathcal{D}_q$. Since $\vert \widetilde{B}(u) \cap
\widetilde{B}(\xi^{a-b}v) \vert \geq 1$ and $\vert \widetilde{B}(u)
\cap \widetilde{B}(\xi^{b-a}v) \vert \geq 1$, the bound $\rho_q(u,v)
\geq 2$ holds.

Case 2: if $a-b = b-a$. Because  $d(u,\xi^{a-b}v)=1$, Remark
\ref{uva} implies
\[
B(u) \cap B(\xi^{a-b} v)  = \{(x,\xi^{a},\xi^{b}) \ : \ x \in
\mathbb{F}_q \}.
\]
It is easy to check that $\vert \widetilde{B}(u) \cap
\widetilde{B}(\xi^{a-b}v) \vert = q-3$, and $\rho_q(u,v) \geq q-3$
follows.
\end{proof}

\section{Proof of Theorem \ref{dream}} \label{prova}

\begin{definition}\label{problemaextremal}
\emph{Given a prime power $q$, define
\[
\rho(q) = \min \{ \rho_q(u,v) \ : \ u, v \in \mathcal{D}_q \}.
\]}
\end{definition}

\begin{example}
\emph{We obtain immediately from Example \ref{ozzy} that
$\rho(4)=0$, $\rho(5) \leq 2$ and $\rho(7)=0$.}
\end{example}

The parameter $\rho(q)$ is completely determined, according to the
next statement.

\begin{theorem}\label{forever}
For a prime power $q$,
\[
\rho(q) =\left\{
\begin{array}{l}
0 \mbox{ if  } 3 \mbox{ divides } q-1,\\
2 \mbox{  otherwise.}
\end{array}
\right.
\]
\end{theorem}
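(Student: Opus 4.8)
The plan is to prove Theorem \ref{forever} by combining the structural characterization in Theorem \ref{icarus} with a counting argument over $\mathbb{Z}_{q-1}$. By Theorem \ref{icarus}, we have $\rho_q(u,v) \geq 2$ for all $u,v \in \mathcal{D}_q$ \emph{unless} there exist $\lambda,\mu \in \mathbb{F}_q^{*}$ with $u = \lambda(1,\xi^{a},\xi^{b})$, $v = \mu(1,\xi^{b},\xi^{a})$, where $a,b \in \mathbb{Z}_{q-1}$ are distinct, non-zero, and satisfy the system $2a = b$, $2b = a$. Hence $\rho(q) = 0$ if and only if this system has a solution with $a \neq b$, $a \neq 0$, $b \neq 0$; otherwise $\rho(q) \geq 2$, and equality $\rho(q) = 2$ then follows because item (i) of the proof of Theorem \ref{icarus} exhibits pairs with $\rho_q(u,v) = 2$ exactly (the intersection $B(u)\cap B(v)$ there has exactly two elements, both in $\mathcal{D}_q$, and the other two summands vanish) — so I would first record that $\rho(q) \leq 2$ always holds, for every $q$ with $q-1 \geq 4$, by pointing to such an explicit pair.

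The core of the argument is the Diophantine analysis of $2a = b$, $2b = a$ in $\mathbb{Z}_{q-1}$. Substituting the first equation into the second gives $4a = a$, i.e. $3a \equiv 0 \pmod{q-1}$; symmetrically $3b \equiv 0$. So the question reduces to: does $3a \equiv 0 \pmod{q-1}$ have a solution $a$ with $a \not\equiv 0$, $b = 2a \not\equiv 0$, and $a \not\equiv 2a$? First, $a \neq b = 2a$ means $a \not\equiv 0$, which is already required; and $b = 2a \neq 0$ is equivalent to $2a \not\equiv 0 \pmod{q-1}$. I would split on whether $3 \mid q-1$. If $3 \mid q-1$, take $a = (q-1)/3$; then $3a \equiv 0$, $a \neq 0$, $b = 2a = 2(q-1)/3 \neq 0$ (since $q - 1 \geq 3$, and $2(q-1)/3 \equiv 0$ would force $3 \mid 2$), and $a \neq b$ because $(q-1)/3 \neq 2(q-1)/3$. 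So a valid solution exists and $\rho(q) = 0$. Conversely, if $3 \nmid q-1$, then $\gcd(3, q-1) = 1$, so $3a \equiv 0 \pmod{q-1}$ forces $a \equiv 0$, contradicting $a \neq 0$; hence no solution exists, every pair falls under item (ii)–(v) of Theorem \ref{icarus}, giving $\rho_q(u,v) \geq 2$ for all $u,v$, and combined with the upper bound $\rho(q) \leq 2$ we get $\rho(q) = 2$.

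A minor point to handle carefully is the reduction "without loss of generality the first coordinate is $1$": this is justified by Lemma \ref{grazieli2}(2), since scaling $u$ (resp. $v$) by an element of $K$ does not change $\widetilde{E}(u)$, hence does not change $\rho_q(u,v)$ (via Corollary \ref{banana001}); I would state this explicitly so that the classification of the $\rho_q(u,v) = 0$ pairs in terms of $(1,\xi^a,\xi^b)$ and $(1,\xi^b,\xi^a)$ is exhaustive up to the action of $S_3 \times K$. I also need to confirm that the small cases are consistent: for $q \in \{2,3,4\}$ the set $\mathcal{D}_q$ may be empty or tiny (for $q = 4$, $\mathcal{D}_4 \neq \emptyset$ and indeed $3 \mid q-1 = 3$, matching $\rho(4) = 0$ from Example \ref{ozzy}), so the statement should be read for $q$ large enough that $\mathcal{D}_q \neq \emptyset$, i.e. $q \geq 4$; I would add a remark to that effect. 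The main obstacle is essentially bookkeeping rather than depth: ensuring that the side conditions $a \neq 0$, $b \neq 0$, $a \neq b$ are simultaneously satisfiable by the candidate $a = (q-1)/3$ when $3 \mid q-1$, and that $3 \nmid q-1$ genuinely kills \emph{all} solutions of the system (not just the symmetric ones) — both of which follow from the one-line observation that the system is equivalent to $3a \equiv 0$, $b \equiv 2a$.
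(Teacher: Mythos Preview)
Your reduction to the Diophantine system $2a=b$, $2b=a$ in $\mathbb{Z}_{q-1}$ is correct and matches the paper's approach: the case split on whether $3\mid q-1$ and the construction $a=(q-1)/3$ when it does are exactly what the paper uses (the paper writes $k$ and $-k$, which is your $a$ and $2a$). The lower bound $\rho(q)\geq 2$ in the non-divisible case also follows cleanly from Theorem~\ref{icarus} as you say.

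The gap is in your upper bound $\rho(q)\leq 2$. You claim that an item~(i) pair $u=(1,\xi^a,\xi^b)$, $v=(1,\xi^c,\xi^d)$ with $a,b,c,d$ pairwise distinct and nonzero has $\rho_q(u,v)=2$ because ``the other two summands vanish''. This is false. By Lemma~\ref{estranged} the other two summands correspond to $\mu=\xi^{a-c}$ and $\mu=\xi^{b-d}$; for instance $\xi^{a-c}v=(\xi^{a-c},\xi^a,\xi^{a-c+d})$ agrees with $u$ in the second coordinate, so $d(u,\xi^{a-c}v)\leq 2$ and Remark~\ref{uva} gives two candidate vectors. Working out when \emph{both} of those lie outside $\mathcal{D}_q$, and likewise for the third summand, forces precisely $2a=b$ and $2b=a$ --- the exceptional system that has no solution when $3\nmid q-1$. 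So in that case no item~(i) pair attains $\rho_q=2$; such pairs always have $\rho_q\geq 3$. (Concretely, for $q=8$ with $a=1,b=2,c=3,d=4$ one finds $d(u,\xi^{a-c}v)=1$, so that summand alone contributes $q-3=5$.) Note also that for $q=5$ there are only three nonzero elements in $\mathbb{Z}_4$, so item~(i) pairs do not even exist.

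The paper obtains the upper bound differently: it takes an item~(v)-type pair $u=(1,\xi^a,\xi^{2a})$, $v=(1,\xi^{2a},\xi^a)$ with $a,2a,3a$ all nonzero (such $a$ exists precisely because $3\nmid q-1$, and already for $q=5$) and checks directly via Remark~\ref{uva} that the three summands contribute $0+1+1=2$. You should replace your item~(i) argument with this explicit computation.
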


\begin{proof}
{\bf Part 1}: If $3$ divides $q-1$, then there is a non-zero $k \in
\mathbb{Z}$ such that $3k=q-1$, that is, $3k=0$ in the ring
$\mathbb{Z}_{q-1}$. Since the vectors $u=(1,\xi^{k},\xi^{-k})$ and
$v=(1,\xi^{-k},\xi^{k})$ satisfy the hypothesis of Proposition
\ref{icarus}, the value $\rho_q (u,v)=0$ holds.

{\bf Part 2}: If $3$ does not divide $q-1$, then there are distinct
numbers  $a,b \in \mathbb{Z}_{q-1}^{*}$ such that $2a=b$ and $2b=a$.
An application of Proposition \ref{icarus} yields $\rho_q (u,v) \geq
2$ for all $u,v \in \mathcal{D}_q$, that is, $\rho(q) \geq 2$.

Choose an element $a \in \mathbb{Z}_{q-1}$ such that $a \neq 0$, $2a
\neq 0$ and $3a \neq 0$. We consider the vectors
$u=(1,\xi^{a},\xi^{2a})$ and $v=(1,\xi^{2a},\xi^{a})$. Thus
$\rho_q(u,v)  = \vert \widetilde{B}(u) \cap \widetilde{B}(v) \vert +
\vert \widetilde{B}(u) \cap \widetilde{B}(\xi^{-a}v) \vert + \vert
\widetilde{B}(u) \cap \widetilde{B}(\xi^{a}v) \vert $, and by Remark
\ref{uva},
\begin{eqnarray*}
B(u) \cap B(v)            & = & \{ (1,\xi^{2a},\xi^{2a}),(1,\xi^{a},\xi^{a}) \} ,\\
B(u) \cap B(\xi^{-a} v)   & = & \{ (\xi^{-a},\xi^{a},\xi^{2a}),(1,\xi^{a},1) \},\\
B(u) \cap B(\xi^{a} v)    & = &
\{(\xi^{a},\xi^{a},\xi^{2a}),(1,\xi^{3a},\xi^{2a}) \}.
\end{eqnarray*}
Clearly, the vectors $(1,\xi^{2a},\xi^{2a})$, $(1,\xi^{a},\xi^{a})$,
$(1,\xi^{a},1)$ and $(\xi^{a},\xi^{a},\xi^{2a})$ do not belong to
$\mathcal{D}_q$. Both vectors $(\xi^{-a},\xi^{a},\xi^{2a})$ and
$(1,\xi^{3a},\xi^{2a})$ belong to $\mathcal{D}_q$. Hence $\vert
\widetilde{B}(u) \cap \widetilde{B}(v) \vert = 0$, $\vert
\widetilde{B}(u) \cap \widetilde{B}(\xi^{-a} v) \vert = 1$, $\vert
\widetilde{B}(u) \cap \widetilde{B}(\xi^{a} v) \vert = 1$, and
consequently $\rho_q (u,v) = 2$.
\end{proof}

\begin{proof}[Proof of Theorem \ref{dream}]:
Since $\mathcal{E} = \{ \widetilde{E}(u) \ : \ u \in \mathcal{D}_q
\}$,  Corollary \ref{banana001} reveals that $(q-1)\rho(q)$ is the
maximum $t$ such that the family $\mathcal{E}$ is  $t$-intersecting.
Thus the computation of $\theta(\mathcal{E})$ is reduced to Theorem
\ref{forever}.
\end{proof}

\section{Lower bounds of short coverings}\label{lower bounds}

\subsection{Necessary conditions for a short covering}

Some necessary conditions for a short co\-ve\-ring with ``few
vectors" are established here. For sake this purpose, let
$\pi_j(u_1,u_2,u_3)=u_j$
 denote the $j$-th canonical projection of $\mathbb{F}_q^3$ into  $\mathbb{F}_q$, where $1 \leq j \leq 3$.
 The symbol $*$ represents an arbitrary element in $\mathbb{F}_q$.

\begin{theorem}\label{caracterizacao}
Given a prime power $q \geq 7$, let $m = \lceil (q+1)/2 \rceil$.
Suppose that $\mathcal{H}=\{ h_1,\ldots,h_m \}$ is a short covering
of $\mathbb{F}_q^3$. The following conditions hold:
\begin{enumerate}
  \item There must be at least a vector in $\mathcal{H}$ with weight 3.
  \item For each coordinate $j$, $1 \leq j \leq 3$, there
must be at least a  vector $h_k \in \mathcal{H}$  such that
$\pi_j(h_k)=0$.
 \item The set $\mathcal{H}$ is $\mathbb{F}_q$-equivalent to one of the sets:
\begin{eqnarray*}
\mathcal{H}_1 & = & \{ (1,1,1),(0,*,*),(*,0,*),(*,*,0),h_5,\ldots,h_m \},\\
\mathcal{H}_2 & = & \{ (1,1,1),(0,*,*),(*,0,0),h_4,\ldots,h_m \}.
\end{eqnarray*}
\end{enumerate}
\end{theorem}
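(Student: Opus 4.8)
The plan is to prove the three assertions in turn, each by a covering argument carried out inside a suitable ``stratum'' of $\mathbb{F}_q^3$, and to finish assertion~3 by a normalisation with the $\mathbb{F}_q$-equivalence group (coordinatewise scaling by nonzero scalars together with coordinate permutations, as in Lemma~\ref{grazieli}). Throughout write $m=\lceil(q+1)/2\rceil$ and note that $m\le(q+2)/2<q-1$ whenever $q\ge 7$; this single inequality is what powers the two counting steps.

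\textbf{Assertion 1.} I would count weight-$3$ vectors. If $\omega(h)\le 1$ then every scalar multiple of $h$ has at least two zero coordinates, and one coordinate change cannot destroy two of them, so $E(h)$ contains \emph{no} weight-$3$ vector. If $\omega(h)=2$, say (up to equivalence) $h=(h_1,h_2,0)$ with $h_1,h_2\neq 0$, then a weight-$3$ vector lies in $E(h)$ only if it agrees with some $\lambda h$ in the first two coordinates, which forces the ratio of its first two entries to be $h_2/h_1$; hence $E(h)$ covers exactly $(q-1)^2$ weight-$3$ vectors (and likewise for the other two positions of the zero). Since $\mathbb{F}_q^3$ has $(q-1)^3$ weight-$3$ vectors, a short covering $\mathcal{H}$ with no weight-$3$ member would cover at most $m(q-1)^2<(q-1)^3$ of them, a contradiction. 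So some $h\in\mathcal{H}$ has $\omega(h)=3$.

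\textbf{Assertion 2.} Fix a coordinate $j$; by permuting coordinates take $j=3$, and suppose for contradiction that $\pi_3(h)\neq 0$ for every $h\in\mathcal{H}$. Look at the $q^2$ vectors of the hyperplane $P=\{v:v_3=0\}$. For $h$ with $(\lambda h)_3\neq 0$ ($\lambda\neq 0$), any $v\in B(\lambda h)\cap P$ must agree with $\lambda h$ in the first two coordinates, so $B(\lambda h)\cap P=\{(\lambda h_1,\lambda h_2,0)\}$; and $B(0)\cap P$ consists only of vectors of weight $\le 1$. Consequently the weight-$2$ vectors of $P$ that $E(h)$ covers form the set $\{(\lambda h_1,\lambda h_2,0):\lambda\in\mathbb{F}_q^{*}\}$, which is a punctured line through the origin of $P$ when $h_1h_2\neq 0$ and lies in the coordinate axes otherwise. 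But the $(q-1)^2$ weight-$2$ vectors of $P$ are precisely the nonzero points of the $q-1$ lines through the origin of $P$ other than the two axes, and these punctured lines are pairwise disjoint. Since each $E(h)$ meets at most one of them, covering all of $P$ forces $|\mathcal{H}|\ge q-1>m$, a contradiction. Hence some $h\in\mathcal{H}$ has $\pi_j(h)=0$; applying this to $j=1,2,3$ gives assertion~2.

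\textbf{Assertion 3.} Using assertion~1, pick $h^{*}\in\mathcal{H}$ with $\omega(h^{*})=3$ and divide each coordinate by the corresponding entry of $h^{*}$; this $\mathbb{F}_q$-equivalence leaves a short covering a short covering and makes $(1,1,1)\in\mathcal{H}$. We are still free to permute coordinates (this fixes $(1,1,1)$) and assertion~2 survives (a zero coordinate stays zero). Now split into two cases. If every member of $\mathcal{H}$ has weight $\ge 2$, the three witnesses from assertion~2 — one with each of the coordinates $1,2,3$ equal to zero — must be pairwise distinct (a shared witness would have two zero coordinates) and each has weight exactly $2$, so $\mathcal{H}$ contains $(1,1,1)$ together with vectors of shapes $(0,*,*),(*,0,*),(*,*,0)$: this is the form $\mathcal{H}_1$. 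Otherwise some $h\in\mathcal{H}$ has weight $\le 1$; permute to put its nonzero entry (if any) in position $1$, so $h=(*,0,0)$, and assertion~2 for $j=1$ supplies a vector $(0,*,*)\in\mathcal{H}$, giving the form $\mathcal{H}_2$. Since $m\ge 4$ for $q\ge 7$ there is room for the listed vectors in either case.

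I expect the main obstacle to be the geometric bookkeeping in assertion~2 — pinning down $E(h)\cap P$ exactly and recognising its weight-$2$ part as a \emph{single} punctured line through the origin — together with checking, in assertion~3, that every degenerate witness configuration (repeated witnesses, or witnesses of weight $0$ or $1$) is captured by exactly one of the two normal forms. The counting inequalities themselves are immediate once $m<q-1$ is available.
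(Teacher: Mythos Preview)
Your proof is correct and follows the same overall strategy as the paper --- a counting argument inside a well-chosen stratum for each of Parts~1 and~2, followed by a normalisation for Part~3 --- but your choice of strata is slightly different and a bit more elementary. For Part~1 the paper counts inside $\mathcal{D}_q$ and invokes Theorem~\ref{bolas} to bound $|\widetilde{E}(h)|\le (q-1)(q-3)$ when $\omega(h)\le 2$, whereas you count all weight-$3$ vectors directly and bound the contribution of a weight-$2$ centre by $(q-1)^2$; both reduce to $m<q-1$, but your version is self-contained. For Part~2 the paper first normalises $h_1=(1,1,1)$ (using Part~1), works in the set $\mathcal{X}_1=\{(0,u_2,u_3):u_2\ne u_3,\ u_2,u_3\ne 0\}$ so that $E(h_1)$ contributes nothing, and then bounds the remaining $m-1$ contributions by $q-1$ each; you instead look at the weight-$2$ points of the hyperplane $\{v_3=0\}$, recognise them as $q-1$ disjoint punctured lines through the origin, and observe that each $E(h)$ with $h_3\ne 0$ hits at most one such line --- so Part~2 in your argument is independent of Part~1. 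For Part~3 your dichotomy (all members of weight~$\ge 2$ versus some member of weight~$\le 1$) is exactly the split the paper makes, and both treatments share the same mild imprecision when $(0,0,0)\in\mathcal{H}$, which is harmless for the downstream applications.
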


\begin{proof}
\textbf{Part 1}: If each vector in $\mathcal{H}$ has weight at most
$2$, Theorem \ref{bolas} yields $\vert \widetilde{E}(h_i) \vert \leq
(q-1)(q-3)$ for every $i$, $1 \leq i \leq m$. Thus the set
$\mathcal{H}$ is able to cover at most $m(q-1)(q-3)$ vectors of
$\mathcal{D}_q$. Because $\vert \mathcal{D}_q
\vert=(q-1)(q-2)(q-3)$, the set $\mathcal{H}$ is not a short
covering of $\mathbb{F}_q^3$, when $q \geq 7$.

\textbf{Part 2}: Assume without loss of generality that
$\omega(h_1)=3$. We also suppose $h_1=(1,1,1)$, by
$\mathbb{F}_q$-equivalence. Consider the plane
\[
\Pi_1= \{ (0,u_2,u_3) \ : \ u_2,u_3 \in \mathbb{F}_q \}
\]
and its subset  $\mathcal{X}_1=\{ (0,u_2,u_3) \in \Pi_1 \ : \ u_2
\neq u_3 \mbox{ and } u_2,u_3 \neq 0 \}$. The heart of the proof
consists in checking that $\mathcal{H}$ is not able to cover
(shortly) all the plane $\Pi_1$. Since $E(h_1) \cap \mathcal{X}_1=
\emptyset$, the whole set $\mathcal{X}_1$ must be covered by
$\{h_2,\ldots,h_m\}$. Suppose for a contradiction that $\pi_1(h_2)
\neq 0, \ldots, \pi_1(h_m) \neq 0$. Each one of the vectors in
$\{h_2,\ldots,h_m\}$ covers at most $q-1$ vectors of
$\mathcal{X}_1$, thus
$$\vert [ E(h_2)\cup \cdots \cup E(h_m)] \cap
\mathcal{X}_1 \vert \leq (m-1)(q-1).
$$
From the fact that $m = (q+1)/2$ if $q$ is odd and $m = (q+2)/2$ if
$q$ is even,
\[
(m-1)(q-1) = \frac{q}{2}(q-1)<(q-1)(q-2)=|\mathcal{X}_1|
\]
holds for every $q \geq 5$. Hence $\mathcal{X}_1 \not\subset E(h_2)
\cup \cdots \cup E(h_m)$. The statement for the case $j=1$ is
proved. The argument for $j=2$ and $j=3$ follows analogously.

\textbf{Part 3}: It is a consequence of both Parts 1 and 2. There is
a vector $h_1$ in $\mathcal{H}$ with $\omega(h_1)=3$. We also assume
$h_1=(1,1,1)$, by $\mathbb{F}_q$-equivalence. The Part 2 implies
that for each coordinate $j$, $1 \leq j \leq 3$, there must be at
least a vector $h_k \in \mathcal{H}$ such that $\pi_j(h_k)=0$. Thus
there are three vectors of  type $(0,*,*)$, $(*,0,*)$, $(*,*,0)$ in
$\mathcal{H}$ or there are two vectors of the type $(0,*,*)$,
$(*,0,0)$ in $\mathcal{H}$. The first case yields that $\mathcal{H}$
and $\mathcal{H}_1$ are $\mathbb{F}_q$-equivalent, while the second
case implies that $\mathcal{H}$ and $\mathcal{H}_2$ are
$\mathbb{F}_q$-equivalent.
\end{proof}

\subsection{Sketch}

The rest of this section is concerned with the computation of lower
bounds on $c(q)$, where $7 \leq q \leq 9$. The condition $c(q) > m$
corresponds to the statement: neither of the $ \binom{q^{3}}{m}$
$m$-subsets of $\mathbb{F}_{q}^3$, $\mathcal{H}$ satisfies the
equation (\ref{e1}).

Since the search space is often huge and the extended balls are
highly intersecting, it is not so accurate checking Eq. (\ref{e1})
straightforwardly. A sharp approach essentially analyzes the
be\-ha\-vior of the extended balls in $\mathcal{D}_q$. A little more
precise, the idea is described briefly as follows.

Given $q$, suppose by absurd that there is a short covering
$\mathcal{H}=\{h_1,\ldots,h_m \}$ of $\mathbb{F}_q^3$ with $m=
\lceil (q+1)/2 \rceil$ vectors. Theorem \ref{caracterizacao} states
 that there are only two possibilities for $\mathcal{H}$. Since
$\mathcal{H}$ is also a short covering of the subset
$\mathcal{D}_q$, the condition $\mathcal{D}_q \subset \cup_{i=1}^{m}
\widetilde{E}(h_i)$ holds. On the other hand, if we show that
\begin{equation} \label{e2}
\left\vert \bigcup_{i=1}^{m} \widetilde{E}(h_i) \right\vert <
(q-1)(q-2)(q-3)
\end{equation}
then an absurd raises: $\mathcal{D}_q$ is not contained in
$\cup_{i=1}^{m} \widetilde{E}(h_i)$. For the cases $q=7$ and $q=9$,
the stronger condition
\begin{equation} \label{e3}
\sum_{i=1}^{m} \vert \widetilde{E}(h_i) \vert < (q-1)(q-2)(q-3)
\end{equation}
is sufficient to show that (\ref{e2}) is valid.

\subsection{New lower bounds}

\begin{proposition}\label{inferior7} We obtain $c(7) \geq 5$.
\end{proposition}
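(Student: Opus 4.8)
\textbf{Proof plan for Proposition \ref{inferior7}.}

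The plan is to argue by contradiction: suppose there is a short covering $\mathcal{H}=\{h_1,\ldots,h_m\}$ of $\mathbb{F}_7^3$ with $m=\lceil (7+1)/2\rceil=4$ vectors. By Theorem \ref{caracterizacao}, $\mathcal{H}$ is $\mathbb{F}_q$-equivalent either to $\mathcal{H}_1$ (which for $q=7$, $m=4$ would be $\{(1,1,1),(0,*,*),(*,0,*),(*,*,0)\}$) or to $\mathcal{H}_2=\{(1,1,1),(0,*,*),(*,0,0),h_4\}$. I would first dispose of the $\mathcal{H}_1$ branch: here all four vectors are pinned down except for the free entries in the three ``$*$'' vectors, and each of $(1,1,1)$, $(0,*,*)$, $(*,0,*)$, $(*,*,0)$ has $\widetilde{E}$-cardinality controlled by Theorem \ref{bolas}. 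Since $(1,1,1)$ has weight $3$ and $\delta=1$, $|\widetilde{E}((1,1,1))|=0$; each of the three coordinate-zero vectors has weight at most $2$, so $|\widetilde{E}(h_i)|\le (q-1)(q-3)=24$. Hence $\sum_i|\widetilde{E}(h_i)|\le 0+3\cdot 24=72$, which is strictly less than $|\mathcal{D}_7|=(q-1)(q-2)(q-3)=6\cdot 5\cdot 4=120$, so (\ref{e3}) holds and $\mathcal{D}_7\not\subset\bigcup_i\widetilde{E}(h_i)$ — contradiction.

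The remaining case is $\mathcal{H}=\mathcal{H}_2=\{(1,1,1),(0,*,*),(*,0,0),h_4\}$. Here $(1,1,1)$ and $(*,0,0)$ contribute nothing to $\mathcal{D}_7$ (weight $1$, and weight $3$ with $\delta=1$, respectively, by Theorem \ref{bolas} parts 4 and the weight-$1$ case), and $(0,*,*)$ contributes at most $(q-1)(q-3)=24$. To cover all $120$ vectors of $\mathcal{D}_7$ we would therefore need $|\widetilde{E}(h_4)|\ge 120-24=96$; but the largest possible value of $|\widetilde{E}(u)|$ for any $u$ is $(q-1)(3q-11)=6\cdot 10=60$ (Theorem \ref{bolas}, part 6), still far short even of $120$. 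So again the crude bound (\ref{e3}) forces $\mathcal{D}_7\not\subset\bigcup_i\widetilde{E}(h_i)$, a contradiction. In both cases no $4$-vector short covering exists, hence $c(7)\ge 5$.

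The main thing to get right is the bookkeeping: one must be sure that in each $\mathbb{F}_q$-equivalence class representative the correct case of Theorem \ref{bolas} is being invoked for each $h_i$ (in particular that the ``$*$'' entries cannot be chosen to land in a case with larger $\widetilde{E}$ than the bound used), and that $m=4$ is indeed the value $\lceil(q+1)/2\rceil$ for $q=7$ so Theorem \ref{caracterizacao} applies. I expect no genuine obstacle here, since for $q=7$ the numbers are so lopsided ($72<120$ and $24+60<120$) that the summed bound (\ref{e3}) already closes the argument without any finer inclusion–exclusion; the subtle cases (where one must use the sharper (\ref{e2}) and analyze actual intersections via $\rho_q$ and Theorem \ref{icarus}) arise only for $q=8$, not here.
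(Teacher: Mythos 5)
Your proposal is correct and follows essentially the same route as the paper: reduce to the two candidate forms $\mathcal{H}_1$, $\mathcal{H}_2$ via Theorem \ref{caracterizacao}, bound $\sum_i \vert\widetilde{E}(h_i)\vert$ by $72$ and $84$ respectively using Theorem \ref{bolas}, and compare with $\vert\mathcal{D}_7\vert=120$ to verify (\ref{e3}). (Only cosmetic quibble: the word ``respectively'' in your $\mathcal{H}_2$ case pairs the weights with the wrong vectors, but the intended bookkeeping is clearly right.)
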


\begin{proof}
Suppose by a contradiction that $\mathcal{H}=\{ h_1,\ldots,h_4 \}$
is a short covering of $\mathbb{F}_7^3$. Theorem
\ref{caracterizacao} states that there are only two forms for
$\mathcal{H}$, namely:
\begin{eqnarray*}
\mathcal{H}_1 & = & \{ (1,1,1),(0,*,*),(*,0,*),(*,*,0) \},\\
\mathcal{H}_2 & = & \{ (1,1,1),(0,*,*),(*,0,0),(*,*,*) \}.
\end{eqnarray*}

If $\mathcal{H}=\mathcal{H}_1$, then Theorem \ref{bolas} yields
$\vert \widetilde{E}(h_1) \vert =0$ and $\vert \widetilde{E}(h_i)
\vert \leq 24$ for $i \in \{2,3,4\}$. Thus $\mathcal{H}_1$ covers at
most $72$ vectors in $\mathcal{D}_7$.

Otherwise, $\mathcal{H}=\mathcal{H}_2$. Theorem \ref{bolas} implies
that $\vert \widetilde{E}(h_1) \vert =0$, $\vert \widetilde{E}(h_2)
\vert \leq 24$, $\vert \widetilde{E}(h_3) \vert =0$ and $\vert
\widetilde{E}(h_4) \vert \leq 60$. Hence $\mathcal{H}_2$ covers at
most $84$ vectors in $\mathcal{D}_7$.

Since $\vert \mathcal{D}_7 \vert=120$, the inequality (\ref{e3})
holds. We conclude that neither $\mathcal{H}_1$ nor $\mathcal{H}_2$
could cover all the space $\mathcal{D}_7$. Thus $c(7) \geq 5$.
\end{proof}

\begin{proposition}\label{inferior9}
The lower bound $c(9) \geq 6$ holds.
\end{proposition}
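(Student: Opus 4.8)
The plan is to run the same argument as in Proposition~\ref{inferior7}, now with $q=9$. Since $m=\lceil (q+1)/2\rceil = 5$ for $q=9$, I would suppose for a contradiction that $\mathcal{H}=\{h_1,\ldots,h_5\}$ is a short covering of $\mathbb{F}_9^3$. By Theorem~\ref{caracterizacao}, $\mathcal{H}$ is $\mathbb{F}_q$-equivalent to one of
\[
\mathcal{H}_1=\{(1,1,1),(0,*,*),(*,0,*),(*,*,0),h_5\},\qquad
\mathcal{H}_2=\{(1,1,1),(0,*,*),(*,0,0),h_4,h_5\},
\]
so it suffices to show that neither shape can cover the subset $\mathcal{D}_9$, since $\mathcal{D}_9\subset\mathbb{F}_9^3$ forces $\mathcal{D}_9\subset\bigcup_{i=1}^{5}\widetilde{E}(h_i)$.

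Next I would read off from Theorem~\ref{bolas}, specialised to $q=9$, the admissible values of $|\widetilde{E}(u)|$: it equals $0$ when $\omega(u)\le 1$, or when $\omega(u)=2$ and $\delta(u)=2$, or when $\omega(u)=3$ and $\delta(u)=1$; it equals $(q-1)(q-3)=48$ when $\omega(u)=2,\ \delta(u)=3$; it equals $(q-1)(2q-6)=96$ when $\omega(u)=3,\ \delta(u)=2$; and it equals $(q-1)(3q-11)=128$ when $\omega(u)=3,\ \delta(u)=3$. In particular $|\widetilde{E}(u)|\le 128$ always, $|\widetilde{E}(u)|\le 48$ whenever $u$ has at least one zero coordinate (so $\omega(u)\le 2$), $|\widetilde{E}(u)|=0$ whenever $u$ has at least two zero coordinates, and $|\widetilde{E}((1,1,1))|=0$ since $\omega=3,\ \delta=1$. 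Summing over $\mathcal{H}_1$ gives $\sum_{i=1}^{5}|\widetilde{E}(h_i)|\le 0+48+48+48+128 = 272$, and over $\mathcal{H}_2$ gives $\sum_{i=1}^{5}|\widetilde{E}(h_i)|\le 0+48+0+128+128 = 304$. Since $|\mathcal{D}_9|=8\cdot 7\cdot 6 = 336$ exceeds both totals, inequality~(\ref{e3}), and hence~(\ref{e2}), holds for each shape; therefore $\mathcal{D}_9\not\subset\bigcup_{i=1}^{5}\widetilde{E}(h_i)$, contradicting that $\mathcal{H}$ is a short covering. Thus $c(9)\ge 6$.

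The whole computation is routine; the only point that needs genuine checking is that the crude bound~(\ref{e3}), which throws away all overlaps among the $\widetilde{E}(h_i)$, already falls short of $336$. It does, comfortably, even in the worse shape $\mathcal{H}_2$ ($304$ versus $336$). This is precisely where $q=8$ differs — there the analogous sum is too large and one must instead estimate the union~(\ref{e2}) more carefully — so for $q=9$ no such refinement is required.
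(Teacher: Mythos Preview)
Your proof is correct and follows essentially the same approach as the paper: reduce via Theorem~\ref{caracterizacao} to the two shapes $\mathcal{H}_1$ and $\mathcal{H}_2$, apply Theorem~\ref{bolas} to bound $\sum_i|\widetilde{E}(h_i)|$ by $272$ and $304$ respectively, and observe that both fall short of $|\mathcal{D}_9|=336$, so inequality~(\ref{e3}) yields the contradiction. Your closing remark contrasting with the $q=8$ case is also in line with the paper's own commentary.
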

\begin{proof}
Suppose for a contradiction that ${\mathcal{H}}=\{ h_1,\ldots,h_5
\}$ is a short covering of $\mathbb{F}_9^3$. From Theorem
\ref{caracterizacao}, the set $\mathcal{H}$ can be
$\mathbb{F}_q$-equivalent to only two forms:
\begin{eqnarray*}
\mathcal{H}_1  & = & \{ (1,1,1),(0,*,*),(*,0,*),(*,*,0),(*,*,*) \},\\
\mathcal{H}_2& = & \{ (1,1,1),(0,*,*),(*,0,0),(*,*,*),(*,*,*) \}.
\end{eqnarray*}

If $\mathcal{{H}}=\mathcal{{H}}_1$, then Theorem \ref{bolas} yields
$\vert \widetilde{E}(h_1) \vert=0$, $\vert \widetilde{E}(h_i) \vert
\leq 48$ for $i \in \{ 2,3,4 \}$, and $\vert \widetilde{E}(h_5)
\vert \leq 128$. The set $\mathcal{H}_1$ covers at most $272$
vectors of $\mathcal{D}_9$.

If $\mathcal{{H}}=\mathcal{{H}}_2$, Theorem \ref{bolas} produces
$\vert \widetilde{E}(h_1) \vert =0$, $\vert \widetilde{E}(h_2) \vert
\leq 48$, $\vert \widetilde{E}(h_3) \vert =0$ and $\vert
\widetilde{E}(h_i) \vert \leq 128$ for $i \in \{4,5\}$. Hence
$\mathcal{H}_2$ covers at most $304$ vectors of $\mathcal{D}_9$.

Since $\vert \mathcal{D}_9 \vert = 336$, the inequality (\ref{e3})
is satisfied here; neither $\mathcal{H}_1$ nor $\mathcal{{H}}_2$ is
a short covering of $\mathcal{{D}}_9$.
\end{proof}

The argument for $c(8) >5 $ is a little more intricate than the
previous ones, because the inequality (\ref{e3}) does not hold for
all candidates $\mathcal{H}$. The search space corresponds to the
family of all $5$-subsets of $\mathbb{F}_{8}^3$, with
$\binom{8^{3}}{5}\simeq 2.8 \times 10^{11}$ candidates. Theorem
\ref{bolas} is not enough powerful to deal with all candidates.
Therefore we shall apply Theorem \ref{dream} too.

\begin{proposition}\label{inferior8}
The bound $c(8) \geq 6$ holds.
\end{proposition}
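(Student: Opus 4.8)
The plan is to mimic the arguments used in Propositions \ref{inferior7} and \ref{inferior9}, i.e.\ to suppose for a contradiction that $\mathcal{H}=\{h_1,\ldots,h_5\}$ is a short covering of $\mathbb{F}_8^3$, invoke Theorem \ref{caracterizacao} to reduce $\mathcal{H}$ to the two $\mathbb{F}_q$-equivalence classes $\mathcal{H}_1=\{(1,1,1),(0,*,*),(*,0,*),(*,*,0),(*,*,*)\}$ and $\mathcal{H}_2=\{(1,1,1),(0,*,*),(*,0,0),(*,*,*),(*,*,*)\}$, and then count how many vectors of $\mathcal{D}_8$ the $\widetilde{E}(h_i)$ can possibly cover. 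First I would record, via Theorem \ref{bolas} with $q=8$, the relevant cardinalities: a weight-$2$ vector with $\delta=3$ contributes at most $(q-1)(q-3)=35$, a weight-$3$ vector with $\delta=2$ contributes at most $(q-1)(2q-6)=70$, and a weight-$3$ vector with $\delta=3$ contributes at most $(q-1)(3q-11)=91$, while $\vert\widetilde{E}((1,1,1))\vert=0$. Since $\vert\mathcal{D}_8\vert=(q-1)(q-2)(q-3)=210$, the naive bound (\ref{e3}) gives, for $\mathcal{H}_1$, at most $35+35+35+91=196<210$, which already rules out $\mathcal{H}_1$.

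The obstruction, as the text warns, is $\mathcal{H}_2$: here the two free vectors $h_4,h_5$ can both be weight-$3$ with $\delta=3$, so (\ref{e3}) only gives at most $0+35+0+91+91=217$, which exceeds $210$; thus the crude sum is not enough and one must instead bound the size of the \emph{union} $\vert\widetilde{E}(h_4)\cup\widetilde{E}(h_5)\vert$ using the overlap. This is exactly where Theorem \ref{dream} (equivalently Corollary \ref{banana001} together with Theorem \ref{forever}) enters: if $h_4,h_5$ both lie in $\mathcal{D}_8$ then $\vert\widetilde{E}(h_4)\cap\widetilde{E}(h_5)\vert=(q-1)\rho_q(h_4,h_5)$, and since $q-1=7\equiv 1\pmod 3$, Theorem \ref{forever} gives $\rho(8)=2$, hence $\vert\widetilde{E}(h_4)\cap\widetilde{E}(h_5)\vert\geq 2(q-1)=14$. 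Therefore, when $h_4,h_5\in\mathcal{D}_8$,
\[
\vert\widetilde{E}(h_4)\cup\widetilde{E}(h_5)\vert\leq 91+91-14=168,
\]
so $\mathcal{H}_2$ covers at most $0+35+0+168=203<210$ vectors of $\mathcal{D}_8$, a contradiction. This handles the main case.

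What remains is the bookkeeping of the degenerate subcases of $\mathcal{H}_2$ in which $h_4$ or $h_5$ does \emph{not} lie in $\mathcal{D}_8$, so that Theorem \ref{dream} does not directly apply; there $\vert\widetilde{E}(h_i)\vert$ is already strictly smaller (either $0$, or at most $35$ if it has weight $2$, or at most $70$ if it has weight $3$ with $\delta=2$) and one checks that in every such case the sum of cardinalities over $\mathcal{H}_2$ stays below $210$: the worst leftover is $0+35+0+91+70=196<210$, and every other combination is smaller. Likewise one should note that $\pi_j(h_k)=0$ for some $k$ forces the presence of the listed low-weight vectors, so no configuration escapes the case split of Theorem \ref{caracterizacao}. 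Assembling these estimates shows that neither $\mathcal{H}_1$ nor $\mathcal{H}_2$ can cover $\mathcal{D}_8$, whence $c(8)\geq 6$. The main obstacle is genuinely the single case $h_4,h_5\in\mathcal{D}_8$ with both of weight $3$ and $\delta=3$, which is precisely why Theorem \ref{dream} is needed here and was not needed for $q=7,9$; the rest is routine arithmetic with Theorem \ref{bolas}.
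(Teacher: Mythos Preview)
Your proposal is correct and follows essentially the same approach as the paper's own proof: reduce to $\mathcal{H}_1$ and $\mathcal{H}_2$ via Theorem~\ref{caracterizacao}, dispatch $\mathcal{H}_1$ and the degenerate subcases of $\mathcal{H}_2$ by the crude sum bound from Theorem~\ref{bolas}, and handle the remaining case $h_4,h_5\in\mathcal{D}_8$ by invoking Theorem~\ref{dream} (the paper uses it directly, you phrase it through Corollary~\ref{banana001} and Theorem~\ref{forever}, which is the same thing) to get $\vert\widetilde{E}(h_4)\cap\widetilde{E}(h_5)\vert\geq 14$ and hence at most $203<210$ vectors of $\mathcal{D}_8$ covered. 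The arithmetic and case analysis match the paper's proof line for line.
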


\begin{proof}
Suppose for a contradiction that  $\mathcal{H}=\{ h_1,\ldots,h_5 \}$
is a short covering of $\mathbb{F}_8^3$. Now, by Theorem
\ref{caracterizacao}, we can assume that $\mathcal{{H}}$ has one of
two possible forms:
\begin{eqnarray*}
\mathcal{{H}}_1 & = & \{ (1,1,1),(0,*,*),(*,0,*),(*,*,0),(*,*,*) \},\\
\mathcal{{H}}_2 & = & \{ (1,1,1),(0,*,*),(*,0,0),(*,*,*),(*,*,*) \}.
\end{eqnarray*}

An application of Theorem \ref{bolas} to the case
$\mathcal{H}=\mathcal{H}_1$ yields $\vert \widetilde{E}(h_1) \vert
=0$, $\vert \widetilde{E}(h_i) \vert \leq 35$ for $i \in \{2,3,4\}$,
and $\vert \widetilde{E}(h_5) \vert \leq 91$. Hence $\mathcal{H}_1$
covers at most $196$ vectors in $\mathcal{D}_8$. Since $\vert
\mathcal{D}_8 \vert=210$, the inequality (\ref{e3}) holds, and thus
$\mathcal{H}_1$ could not be a short covering of $\mathcal{D}_8$.

On the other hand, computation of the bounds from Theorem
\ref{bolas} for the case $\mathcal{H}=\mathcal{H}_2$ shows us $\vert
\widetilde{E}(h_1) \vert =0$, $\vert \widetilde{E}(h_2) \vert \leq
35$, $\vert \widetilde{E}(h_3) \vert =0$, and $\vert
\widetilde{E}(h_i) \vert \leq 91$ for $i \in \{4,5\}$. Hence
$\mathcal{H}_2$ covers at most $217$ vectors in $\mathcal{D}_8$.

\emph{Claim}: The vectors $h_4$ and $h_5$ belong to $\mathcal{D}_8.$

Indeed, if $h_4 \not\in \mathcal{D}_8$, Theorem \ref{bolas} implies
$\vert \widetilde{E}(h_4) \vert \leq 70$. Hence $\mathcal{H}_2$
covers at most $196$ vectors of $\mathcal{D}_{8}$, and
$\mathcal{H}_2$ is not a short covering of $\mathcal{D}_{8}$, since
$\vert \mathcal{D}_8 \vert=210$. If $h_5$ does not satisfy these
conditions, we obtain an absurd analogously.

By Theorem \ref{dream}, $\vert \widetilde{E}(h_4) \cap
\widetilde{E}(h_5) \vert \geq 14$, and thus $\mathcal{H}_2$ is a
short covering of at most $203$ vectors of $\mathcal{D}_8$, thus the
inequality (\ref{e2}) holds.
\end{proof}

\section{Construction of short coverings} \label{bounds}

\subsection{From actions of groups to coverings}

A systematical way of finding good short coverings is described in
\cite[Theorem 1]{irene} on the basis of invariant sets under
suitable actions. An adaptation of this method is described below.

The standard action of $G=S_3 \times K$ on $\mathbb{F}_{q}^{3}$
plays a central role in our results. The set
$$
\mathcal{A}_{q}=\{(u_1,u_2,u_3) \in \mathbb{F}_q^3 \ : \ u_1,u_2,u_3
\mbox{ are pairwise distinct} \}.
$$
is invariant by the action of the direct group $S_{3} \times K$,
which has two orbits, namely, $\{u \in \mathcal{A}_q \ : \
d(u,0)=3\}$ and $\{u\in \mathcal{A}_q \ : \ d(u,0)=2\}$.

\begin{theorem}\label{metodo}
Let $N$ be a subgroup of $S_{3}$ and choose a subset $\mathcal{L}$
of $\mathbb{F}_{q}^{3}$ which is invariant under the action of $N$,
that is, $\mathcal{L}^{N}=\mathcal{L}$. Let $\mathcal{O}$ denote the
family of all orbits of the action of $N \times K$ under
$\mathcal{A}_q$. Suppose that each orbit of the action of
$S_{3}\times K$ on $\mathcal{A}_q$ contains an element $u$ which can
be written as $u=\lambda h+ \mu e_{j}$ for some $h\in \mathcal{L}$,
$\lambda $, $\mu \in \mathbb{F}_{q}$ and $j\in
\{1,2,3\}$. Thus the set  $\mathcal{L} \cup \{(1,1,1)\}$ is a short covering of $\mathbb{F}%
_{q}^{3}.$
\end{theorem}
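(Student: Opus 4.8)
The plan is to show that $\mathcal{L} \cup \{(1,1,1)\}$ covers every vector of $\mathbb{F}_q^3$ by splitting the space according to the weight of the target vector and the number of distinct coordinates. First I would observe that the set $\mathcal{L}$ is a short covering of all vectors whose coordinates are \emph{not} pairwise distinct: if $w=(w_1,w_2,w_3)$ has two equal coordinates (or a repeated zero), then $w$ lies in the ball $B((1,1,1))$ up to scaling — more precisely, the standard action of $S_3\times K$ on such $w$ produces a scalar multiple of a vector at Hamming distance at most $1$ from $(1,1,1)$, so $w\in E((1,1,1))$. Hence it remains only to cover $\mathcal{A}_q$, the vectors with pairwise distinct coordinates; this is exactly where the orbit hypothesis is designed to bite.

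Next I would use the fact, stated in the excerpt, that $\mathcal{A}_q$ is invariant under $S_3\times K$ and decomposes into the two orbits $\{u\in\mathcal{A}_q : d(u,0)=3\}$ and $\{u\in\mathcal{A}_q: d(u,0)=2\}$. Pick any $w\in\mathcal{A}_q$ and let $\mathcal{O}_w$ be its $S_3\times K$-orbit. By hypothesis $\mathcal{O}_w$ contains some $u=\lambda h+\mu e_j$ with $h\in\mathcal{L}$, $\lambda,\mu\in\mathbb{F}_q$, $j\in\{1,2,3\}$. Since $u$ is obtained from $h$ by scaling by $\lambda$ and then changing a single coordinate, we have $d(u,\lambda h)\le 1$, so $u\in B(\lambda h)\subseteq E(h)$. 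The key step is now to transport this membership back to $w$ itself: because $w$ and $u$ lie in the same $S_3\times K$-orbit, there is $(\varphi,\sigma)\in S_3\times K$ with $w=u^{(\varphi,\sigma)}$, and I must check that $E(h)$ behaves well under this group element. Permuting coordinates sends $E(h)$ to $E(h^\varphi)$ and scaling all coordinates by $\sigma\in K$ fixes $E(h)$ setwise (this is precisely the content of the invariance properties behind Lemma \ref{grazieli} and Lemma \ref{grazieli2}), and moreover $h^\varphi\in\mathcal{L}$ since $\mathcal{L}^N=\mathcal{L}$ and $N\subseteq S_3$ — wait, here I need $\varphi\in N$, which is where some care is required, so I would instead argue directly with the $N\times K$-orbits: the family $\mathcal{O}$ of $N\times K$-orbits on $\mathcal{A}_q$ refines the $S_3\times K$-orbits, and the hypothesis applied orbit-by-orbit, combined with $N$-invariance of $\mathcal{L}$, guarantees that for each $N\times K$-orbit we land inside $E(h)$ for a genuinely available $h\in\mathcal{L}$.

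Concretely, the argument I would write is: fix $w\in\mathcal{A}_q$; its $S_3\times K$-orbit contains a representative $u=\lambda h+\mu e_j$; translate $u$ within its $N\times K$-orbit so that the representative used is $N\times K$-equivalent to $w$ up to the residual $S_3/N$ action, and use that $h\in\mathcal{L}$ implies $h^{\varphi}\in\mathcal{L}$ for $\varphi\in N$ while a coordinate permutation outside $N$ is absorbed by passing to a different orbit representative already covered by the hypothesis. Then $w\in E(h')$ for some $h'\in\mathcal{L}$, so $w\in\bigcup_{h\in\mathcal{L}}E(h)$. Combining with the first paragraph, every vector of $\mathbb{F}_q^3$ lies in $\bigcup_{h\in\mathcal{L}\cup\{(1,1,1)\}}E(h)$, which is the definition \eqref{e1} of a short covering.

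The main obstacle I anticipate is the bookkeeping around the two group actions: the hypothesis is phrased in terms of $S_3\times K$-orbits on $\mathcal{A}_q$, while $\mathcal{L}$ is only assumed invariant under the smaller subgroup $N\times K$ (via $N\le S_3$), so one must be careful that when we pull a covering membership back along an $S_3$-element that is not in $N$, the resulting center still lies in $\mathcal{L}$ — this is exactly the role of listing $\mathcal{O}$ as the $N\times K$-orbits and checking the hypothesis against each of them rather than just against the coarser $S_3\times K$-orbits. Everything else (the reduction to $\mathcal{A}_q$, the distance-at-most-one observation, the $K$-invariance of extended balls) is routine and follows from Lemmas \ref{grazieli} and \ref{grazieli2} together with the orbit description of $\mathcal{A}_q$ given just before the statement.
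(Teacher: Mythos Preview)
Your approach is exactly the paper's: handle $\mathbb{F}_q^3\setminus\mathcal{A}_q$ by writing any vector with a repeated coordinate as $\lambda(1,1,1)+(\mu-\lambda)e_j$, and for $v\in\mathcal{A}_q$ pass to an orbit representative $u=\lambda h+\mu e_j$, then pull the relation $u\in E(h)$ back to $v$ via $v=u^{\sigma\varphi}$ using $h^{\varphi}\in\mathcal{L}$. The bookkeeping worry you flag is legitimate and the paper resolves it the way you suggest in your third paragraph: the proof takes $\varphi\in N$ (so that $N$-invariance of $\mathcal{L}$ applies), which amounts to applying the hypothesis to each $N\times K$-orbit in $\mathcal{O}$ rather than to the two coarse $S_3\times K$-orbits --- consistent with the example following the theorem, where representatives of all $N\times K$-orbits are checked --- so you should read the hypothesis as ranging over the orbits in $\mathcal{O}$.
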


\begin{proof}
Take an arbitrary vector $v$ in $\mathbb{F}_{q}^{3}$. We analyze two
cases.

Case 1: the case where $v\in \mathbb{F}_{q}^{3}\setminus
\mathcal{A}_q$. Here $v$ has at least two coincident coordinates,
say $v=(\lambda,\lambda,\mu )$. Thus  $v=\lambda
(1,1,1)+(\mu-\lambda) e_{3}$, that is, $v \in E((1,1,1))$.

Case 2: $v\in \mathcal{A}_q$. We show that $\cup _{h\in
\mathcal{L}}{E}(h)$ contains $v$. Since $v\in \mathcal{A}_q$, there
is a vector $u\in \mathcal{O}$ such that $u=v^{\sigma \gamma }$. By
hypothesis, $u=\lambda h+\mu e_{j}$, where $\lambda \neq 0$. By
applying $\sigma $, we obtain $u^{\sigma }=\lambda' h+\mu' e_{j}$.
By applying $\varphi $, we have $v=u^{\sigma \varphi}=\lambda'
h^{\varphi }+\mu' e_{j}^{\varphi }$. Because $\mathcal{L}$ is an
invariant set under the action of $\varphi $, the required statement
is obtained.
\end{proof}

\begin{example}
\emph{The theorem above is optimal for small instances $q=3,4,5,7$.
For example, the sharp bound $c(5)\leq 4$ can be reproved as
follows. Choose $\mathcal{L}=\{(0,2,3),(3,0,2),(2,3,0)\}$. Because
$\mathcal{L}$ is invariant under the action of the 3-cycle $\varphi:
(u_1,u_2,u_3) \mapsto (u_2,u_3,u_1),$ take $N= <\varphi>$ the
subgroup generated by $\varphi$. The action of $G=<\varphi> \times
K$ on $\mathcal{A}_5$ produces five orbits. Since the stabilizer of
a vector $u$ is the trivial subgroup, each orbit $u^{G}$ has twelve
elements. Moreover, the representatives are covered by
$\mathcal{L}$, as described below
\[
\begin{array}{ll}
(0,1,2)=3(0,2,3)+3e_{3}  & (0,1,3)=3(0,2,3)+4e_{3} \\
(0,1,4)=3(0,2,3)         & (1,2,3)=1(0,2,3)+1e_{1} \\
(1,3,2)=4(0,2,3)+1e_{1}.&
\end{array}
\]
The bound follows from Theorem \ref{metodo}.}
\end{example}

\begin{proposition}\label{superior8}
The upper bound $c(8) \leq 6$ holds.
\end{proposition}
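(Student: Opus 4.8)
The plan is to exhibit an explicit short covering of $\mathbb{F}_8^3$ with six vectors, built via Theorem \ref{metodo}. First I would pick a cyclic subgroup $N = \langle \varphi \rangle$ of $S_3$, where $\varphi$ is the $3$-cycle $(u_1,u_2,u_3) \mapsto (u_2,u_3,u_1)$, and search for a small set $\mathcal{L}$ of three vectors in $\mathbb{F}_8^3$ that is invariant under $\varphi$ (i.e.\ $\mathcal{L}$ is a single $\varphi$-orbit $\{w, w^\varphi, w^{\varphi^2}\}$ of size $3$, obtained from a representative $w$ whose three coordinates are pairwise distinct). With $|\mathcal{L}| = 3$, the set $\mathcal{L} \cup \{(1,1,1)\}$ has only $4$ vectors — better than the target $6$ — so in fact I expect to need a slightly larger invariant set, most likely $\mathcal{L}$ consisting of an orbit of size $3$ together with one or two extra vectors (of weight $\leq 2$) that are themselves $\varphi$-invariant or form a short orbit, to cover the orbits that the single $3$-orbit misses; the arithmetic of $q=8$ (where $q-1 = 7$ and $7 \equiv 1 \pmod 3$, so $\mathbb{F}_8^*$ has no element of order $3$) will dictate exactly how many orbits of $S_3 \times K$ on $\mathcal{A}_8$ there are and hence how large $\mathcal{L}$ must be.

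Concretely, the key steps are: (1) enumerate the orbits of $G = N \times K$ acting on $\mathcal{A}_8 = \{u \in \mathbb{F}_8^3 : u_1,u_2,u_3 \text{ pairwise distinct}\}$, recording one representative per orbit; since $\mathbb{F}_8^* \cong \mathbb{Z}_7$ has prime order, the stabilizer of a generic vector under $K$ is trivial, so each $G$-orbit has size $3 \cdot 7 = 21$ when the $\varphi$-stabilizer is also trivial, and $|\mathcal{A}_8| = 8 \cdot 7 \cdot 6 = 336$, giving a small number of orbits to handle. (2) Choose $\mathcal{L}$ (a concrete list of, I expect, five vectors — e.g.\ a weight-$3$ orbit of size $3$ plus two weight-$2$ vectors chosen to patch the remaining orbit representatives) that is $N$-invariant. (3) Verify, representative by representative, that every orbit of the full group $S_3 \times K$ on $\mathcal{A}_8$ contains some $u$ expressible as $u = \lambda h + \mu e_j$ with $h \in \mathcal{L}$, $\lambda \neq 0$, $\mu \in \mathbb{F}_8$, $j \in \{1,2,3\}$ — i.e.\ $u \in E(h)$ for some $h \in \mathcal{L}$. (4) Conclude by Theorem \ref{metodo} that $\mathcal{L} \cup \{(1,1,1)\}$ is a short covering, and since $|\mathcal{L} \cup \{(1,1,1)\}| = 6$, this gives $c(8) \leq 6$.

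The main obstacle is step (3): finding an $\mathcal{L}$ of the right (small) size for which the covering condition genuinely holds for \emph{every} orbit. This is essentially a finite search over candidate representatives in $\mathbb{F}_8^3$, and the difficulty is that extended balls are highly intersecting yet one must still cover all $\mathcal{A}_8$-orbits; getting down to exactly $6$ vectors (matching the lower bound from Proposition \ref{inferior8}) may require a clever choice of $\mathcal{L}$ rather than a naive one, and will involve carrying out the field arithmetic of $\mathbb{F}_8$ explicitly (fixing a primitive element $\xi$ with $\xi^3 = \xi + 1$, say). Once a working $\mathcal{L}$ is pinned down, the verification in step (3) is a routine but slightly tedious case check over the handful of orbit representatives, exactly as in the $q=5$ example preceding this proposition.
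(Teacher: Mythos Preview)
Your overall strategy---produce an explicit $\mathcal{L}$ and invoke Theorem~\ref{metodo}---is exactly what the paper does, but your specific choice $N = \langle \varphi\rangle$ cannot yield a covering of size $6$. The obstruction is arithmetic: the only $\varphi$-fixed vectors in $\mathbb{F}_8^3$ are those of the form $(a,a,a)$, and every vector within Hamming distance $1$ of a scalar multiple of $(1,1,1)$ has at least two equal coordinates, so $E((a,a,a))\cap\mathcal{A}_8=\emptyset$; such vectors add nothing beyond what $(1,1,1)$ already contributes. Hence any $\varphi$-invariant $\mathcal{L}$ whose elements genuinely help cover $\mathcal{A}_8$ must be a union of full $\varphi$-orbits of size $3$, forcing $|\mathcal{L}| \in \{3,6,9,\dots\}$ and therefore $|\mathcal{L}\cup\{(1,1,1)\}| \in \{4,7,10,\dots\}$. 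Since Proposition~\ref{inferior8} gives $c(8)\ge 6$, size $4$ is impossible and size $7$ overshoots the target. Your parenthetical hope of adding ``one or two extra vectors of weight $\le 2$ that are themselves $\varphi$-invariant or form a short orbit'' cannot be realized: no nonzero vector of weight at most $2$ is fixed by the $3$-cycle, and since $\varphi$ has prime order $3$ every non-fixed orbit already has size $3$.

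The paper sidesteps this by taking $N$ trivial (it writes ``choose the group $G$ isomorphic to $K$''), so the invariance condition $\mathcal{L}^N = \mathcal{L}$ becomes vacuous and $\mathcal{L}$ may be any five vectors. Concretely it takes
\[
\mathcal{L} = \{\,(0,0,\xi),\ (1,\xi,0),\ (1,\xi^2,\xi^3),\ (1,\xi^3,\xi^2),\ (\xi^6,\xi^5,1)\,\}
\]
and checks, $K$-orbit by $K$-orbit, that every representative $(1,u_2,u_3)\in\mathcal{D}_8$ (and every vector with a zero coordinate, via the first two elements of $\mathcal{L}$) lies in $E(h)$ for some $h\in\mathcal{L}$. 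The verification is exactly the routine case analysis you anticipated; the only correction your plan needs is to drop the $3$-cycle symmetry and work with $N=\{e\}$.
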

\begin{proof}
Choose the vectors
\[
\begin{array}{lll}
h_1= (1,1,1) ,            & h_2= (0,0,\xi^1)    ,   & h_3=(1,\xi^1,0) ,    \\
h_4= (1,\xi^2,\xi^3) ,    & h_5=(1,\xi^3,\xi^2) ,   &
h_6=(\xi^6,\xi^5,1),
\end{array}
\]
where $\xi$ denotes a generator of the group $\mathbb{F}_8^{*}$. We
claim that $ \mathcal{H}=\{ h_1, \ldots, h_6 \}$ is a short covering
of $\mathbb{F}_8^3$. In order to apply Theorem \ref{metodo}, choose
the group $G$ isomorphic to $K$.

Take an arbitrary vector $u=(u_1,u_2,u_3) \in \mathbb{F}_8^3$. We
analyze now the case where $u=(u_1,u_2,u_3)  \in \mathbb{F}_8^3
\setminus \mathcal{D}_8$.  If there are at least two coincident
coordinates in $u$, then $u$ is covered by $h_1$. If zero appears as
a coordinate of $u$, then $u$ is covered (shortly) by $h_2$ or
$h_3$.

Otherwise,  $u$ belongs to $\mathcal{D}_8$. The representatives of
the orbits on $\mathcal{D}_8$ can be chosen as $u=(1,u_2,u_3)$,
where $u_2,u_3\in \{ \xi^1,\xi^2,\xi^3,\xi^4,\xi^5,\xi^6\}$. For the
cases where $u_2\in \{\xi^1,\xi^2,\xi^3 \}$ or $u_3 \in
\{\xi^2,\xi^3 \}$, a simple look shows us that $u$ is covered
(shortly) by the vectors $h_3$, $h_4$ or $h_5$. It remains to
analyze the cases $u_2 \not\in \{\xi^1,\xi^2,\xi^3 \}$, and $u_3
\not\in \{\xi^2,\xi^3 \}$, that is, we still need to show that
$\mathcal{H}$ is a short covering of the representatives
\[
(1,\xi^4,\xi^1), \ (1,\xi^4,\xi^5), \ (1,\xi^4,\xi^6), \
(1,\xi^5,\xi^1), \ (1,\xi^5,\xi^4), \ (1,\xi^5,\xi^6), \
(1,\xi^6,\xi^1), \ (1,\xi^6,\xi^4), \ (1,\xi^6,\xi^5).
\]
From the equalities
\[
\xi^2(1,\xi^2,\xi^3) = (\xi^2,\xi^4,\xi^5), \ \mbox{ and } \
\xi^3(1,\xi^2,\xi^3) = (\xi^3,\xi^5,\xi^6),
\]
the vector $h_4=(1,\xi^2,\xi^3)$ covers $(1,\xi^4,\xi^5)$, and
$(1,\xi^5,\xi^6)$. The equality
$$
\xi^2(1,\xi^3,\xi^2) = (\xi^2,\xi^5,\xi^4)
$$
implies that $h_5=(1,\xi^3,\xi^2)$ covers $(1,\xi^5,\xi^4)$. From
\[
\xi^1(\xi^6,\xi^5,1) = (1,\xi^6,\xi^1), \mbox{ and } \
\xi^6(\xi^6,\xi^5,1) =  (\xi^5,\xi^4,\xi^6),
\]
the vector $h_6= (\xi^6,\xi^5,1)$ covers $(1,\xi^4,\xi^1)$,
$(1,\xi^6,\xi^1)$, $(1,\xi^6,\xi^4)$, $(1,\xi^6,\xi^5)$,
$(1,\xi^5,\xi^1)$, and $(1,\xi^4,\xi^6)$.

Therefore  $\mathcal{H}$ is a short covering of $\mathbb{F}_8^3$.
\end{proof}

\begin{proposition}\label{superior9}
We obtain $c(9) \leq 6$.
\end{proposition}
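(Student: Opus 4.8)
The plan is to mimic the proof of Proposition~\ref{superior8}: exhibit an explicit set $\mathcal{H}=\{h_1,\ldots,h_6\}\subset\mathbb{F}_9^3$ and verify, via Theorem~\ref{metodo}, that it short-covers $\mathbb{F}_9^3$. Concretely, I would fix a generator $\xi$ of $\mathbb{F}_9^{*}$, take $h_1=(1,1,1)$ to handle (Case~1 of Theorem~\ref{metodo}) every vector with a repeated coordinate, and take two vectors with a zero coordinate, say $h_2=(0,0,\xi^j)$ and $h_3=(1,\xi^k,0)$, to mop up the vectors of $\mathcal{A}_9\setminus\mathcal{D}_9$ that have exactly one zero entry together with the vectors of weight $\le 2$. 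The remaining three vectors $h_4,h_5,h_6$, all chosen inside $\mathcal{D}_9$ with first coordinate $1$, are the ``working'' vectors that must cover $\mathcal{D}_9$ up to the action of the group $G\cong K$ (scalar multiplication).

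First I would enumerate the orbit representatives of the $G=K$-action on $\mathcal{D}_9$: each orbit contains a unique vector of the form $(1,u_2,u_3)$ with $u_2,u_3\in\{\xi^1,\ldots,\xi^7\}$ distinct and different from $1$, giving $7\cdot 6=42$ representatives, i.e.\ $42\cdot 8 = 336 = |\mathcal{D}_9|$ as a sanity check. Then for each $h_i$ ($i\ge 3$) I would list the vectors of $\widetilde{E}(h_i)$ that are $G$-equivalent to a representative of the required form, i.e.\ compute $\lambda h_i$ for all $\lambda\in\mathbb{F}_9^{*}$, apply Remark~\ref{uva}-type reasoning (or just directly list the radius-$\le 1$ ball points lying in $\mathcal{D}_9$), and normalize the first coordinate to $1$. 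Using Theorem~\ref{bolas}, each working vector of weight $3$ and three distinct coordinates covers $(q-1)(3q-11)=8\cdot 16=128$ points of $\mathcal{D}_9$, i.e.\ $16$ orbit representatives; three such vectors cover at most $48$ representatives, comfortably more than $42$, so a suitable choice exists and the task is purely to pin it down. I would then tabulate explicit equalities of the form $\lambda h_i = (\text{representative after a coordinate permutation-free scaling})$, exactly as in the display-lines of Proposition~\ref{superior8}, to certify that all $42$ representatives are hit.

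The main obstacle is the bookkeeping: unlike $q=8$, here $q-1=8$ and the multiplicative group of $\mathbb{F}_9$ is cyclic of order $8$, so one must be careful that the arithmetic progressions of exponents produced by scaling a vector $(1,\xi^a,\xi^b)$ — namely the triples $\{(\lambda,\lambda\xi^a,\lambda\xi^b)\}$ and their normalizations — actually sweep out the needed $16$ representatives each, without overlap forcing a deficit. The delicate point is choosing $h_4,h_5,h_6$ (and the parameters $j,k$ in $h_2,h_3$) so that the three covered sets of $16$ representatives union to all $42$; since $\mathbb{F}_9$ is not a prime field, one cannot simply reuse the $\mathbb{Z}_q$ computations and must track field elements via the chosen generator $\xi$ with $\xi^2=\xi+1$ (or whatever Conway-style relation is fixed). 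I expect that a good choice is again of the ``twisted'' shape $h_4=(1,\xi^2,\xi^3)$, $h_5=(1,\xi^3,\xi^2)$, $h_6=(\xi^6,\xi^5,1)$ or a close analogue, and that the bulk of the write-up will be a short table of scaling identities analogous to the one in Proposition~\ref{superior8}, followed by the one-line invocation of Theorem~\ref{metodo} to conclude $c(9)\le 6$.
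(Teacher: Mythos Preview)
Your plan is the paper's strategy exactly: exhibit six explicit vectors, use $h_1=(1,1,1)$ for repeated-coordinate vectors, two low-weight vectors for everything with a zero entry, and then verify that the remaining vectors hit every $K$-orbit representative $(1,u_2,u_3)$ in $\mathcal{D}_9$. Two points of execution differ from your sketch. First, the paper does \emph{not} take all three working vectors inside $\mathcal{D}_9$: its choice is $h_2=(1,0,0)$, $h_3=(0,1,\xi^4)$, $h_4=(1,\xi^2,\xi^4)$, $h_5=(1,\xi^4,\xi^2)$, and $h_6=(1,\xi^6,\xi^6)$, so $h_6$ has $\delta(h_6)=2$. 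Second, the verification is organised around the parity of exponents in $\mathbb{Z}_8\cong\mathbb{F}_9^{*}$: the triple $h_4,h_5,h_6$ immediately covers every representative $(1,u_2,u_3)$ with $u_2\in\{\xi^2,\xi^4,\xi^6\}$ or $u_3\in\{\xi^2,\xi^4,\xi^6\}$, leaving only the twelve representatives with both exponents odd, and those are dispatched by four scaling identities each for $h_3,h_4,h_5$. Your counting heuristic $3\times 16\ge 42$ ignores the mandatory overlaps coming from Theorem~\ref{dream} (here $3\nmid q-1$, so any two $\widetilde{E}$'s over $\mathcal{D}_9$ share at least two representatives), and the $q=8$ vectors you tentatively copy will not transplant; the even/odd device is what makes the paper's bookkeeping short.
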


\begin{proof}
Consider the vectors
\[
\begin{array}{llllll}
h_1=(1,1,1),          & h_2=(1,0,0),         & h_3=(0,1,\xi^4),\\
h_4= (1,\xi^2,\xi^4), & h_5=(1,\xi^4,\xi^2), & h_6(1,\xi^6,\xi^6),
\end{array}
\]
where $\xi$ denotes an generator of $\mathbb{F}_9^{*}$. Let us show
that $\mathcal{H}=\{ h_1,\ldots,h_6\}$ is a short covering of
$\mathbb{F}_9^3$.

Let $u$ be an arbitrary vector in $\mathbb{F}_9^3$. If $u \in
\mathbb{F}_9^3 \setminus \mathcal{D}_9$, then $u$ has at least two
coincident coordinates, or zero appears at least in one coordinate
of $u$. In the first case, $u$ is covered by $(1,1,1)$, and $u$ is
covered by $(1,0,0)$ and $(0,1,\xi^4)$ in the second.

We suppose now $u=(u_1,u_2,u_3)\in \mathcal{D}_9$. Assume that
$u=(1,u_2,u_3)$, where $u_2,u_3\in \{
\xi^1,\xi^2,\xi^3,\xi^4,\xi^5,\xi^6,\xi^7\}$. If $u_2\in
\{\xi^2,\xi^4,\xi^6 \}$ or $u_3 \in \{\xi^2,\xi^4,\xi^6 \}$, then
$u$ is covered by the vectors $(1,\xi^2,\xi^4)$, $(1,\xi^4,\xi^2)$
or $(1,\xi^6,\xi^6)$.

We need to show that $\mathcal{H}$ is a short covering of the
vectors below:
\begin{equation}\label{bloco9}
\begin{array}{llllllllllllll}
  (1,\xi^1,\xi^3), & (1,\xi^1,\xi^5), & (1,\xi^1,\xi^7), &
  (1,\xi^3,\xi^1), & (1,\xi^3,\xi^5), & (1,\xi^3,\xi^7), \\
  (1,\xi^5,\xi^1), & (1,\xi^5,\xi^3), & (1,\xi^5,\xi^7), &
  (1,\xi^7,\xi^1), & (1,\xi^7,\xi^3), & (1,\xi^7,\xi^5).
\end{array}
\end{equation}
Note that
\[
\begin{array}{lllllllllllllll}
\xi^1(0,1,\xi^4) = (0,\xi^1,\xi^5), & \xi^3(0,1,\xi^4)    =
(0,\xi^3,\xi^7) \\
 \xi^5(0,1,\xi^4)=
(0,\xi^5,\xi^1), & \xi^7(0,1,\xi^4) = (0,\xi^7,\xi^3), &
\end{array}
  \]
thus $h_3=(0,1,\xi^4)$ covers $(1,\xi^1,\xi^5)$,
  $(1,\xi^3,\xi^7)$, $(1,\xi^5,\xi^1)$, and $(1, \xi^7, \xi^3)$.
The equalities
\[
\begin{array}{llll}
  \xi^1(1,\xi^2,\xi^4) = (\xi^1,\xi^3,\xi^5),   & \xi^3(1,\xi^2,\xi^4)  =(\xi^3,\xi^5,\xi^7), \\
  \xi^5(1,\xi^2,\xi^4)  =  (\xi^5,\xi^7,\xi^1), & \xi^7(1,\xi^2,\xi^4)      = (\xi^7,\xi^1,\xi^3)
\end{array}
\]
imply that $h_4=(1,\xi^2,\xi^4)$ covers $(1,\xi^3,\xi^5)$,
$(1,\xi^5,\xi^7)$, $(1,\xi^7,\xi^1)$, and $(1,\xi^1,\xi^3)$. Since
\[
\begin{array}{llllllllll}
 \xi^1(1,\xi^4,\xi^2) = (\xi^1,\xi^5,\xi^3), & \xi^3(1,\xi^4,\xi^2)
  =(\xi^3,\xi^7,\xi^5),\\
  \xi^5(1,\xi^4,\xi^2)=(\xi^5,\xi^1,\xi^7)  , & \xi^7(1,\xi^4,\xi^2)  = (\xi^7,\xi^3,\xi^1),
\end{array}
\]
the vector $h_5=(1,\xi^4,\xi^2)$ covers $(1,\xi^5,\xi^3)$,
$(1,\xi^7,\xi^5)$, $(1,\xi^1,\xi^7)$ and $(1,\xi^3,\xi^1)$.

Because the vectors in (\ref{bloco9}) are covered by $(0,1,\xi^4)$,
$(1,\xi^2,\xi^4)$, or $(1,\xi^4,\xi^2),$ we conclude that
$\mathcal{H}$ is a short covering of $\mathbb{F}_{9}^3$.
\end{proof}

\section{Conclusion}

We conclude this work with the following contribution to the
computation of the function $c$.

\begin{proof}[Proof of Theorem \ref{anderson}]
It is an immediately consequence of the results in this work. In
fact, the lower bound $c(7) \geq 5$ follows from Proposition
\ref{inferior7}, while the upper bound $c(7) \leq 5$ follows from
the inequality (\ref{bestinfsup}). The value $c(8)=6$ is consequence
of the Propositions \ref{inferior8}, and \ref{superior8}. We obtain
from Propositions \ref{inferior9}, and \ref{superior9} that
$c(9)=6$.
\end{proof}

\end{document}